\DeclareMathOperator{\GU}{GU}
\DeclareMathOperator{\Sp}{Sp}
\DeclareMathOperator{\SL}{SL}
\DeclareMathOperator{\SU}{SU}
\DeclareMathOperator{\SO}{SO}
\DeclareMathOperator{\GL}{GL} 
 \DeclareMathOperator{\soc}{soc}
\DeclareMathOperator{\aut}{Aut} \DeclareMathOperator{\out}{Out}
\DeclareMathOperator{\alt}{Alt}
\newcommand{\mn}{\widetilde{m}}
\newcommand{\w}{\widetilde}
\newcommand{\FF}{\mathbb F}
\newcommand{\N}{\mathbb N}
\renewcommand{\emptyset}{\varnothing}
\newcommand{\Cl}{\mathcal C}
\newtheorem{theorem}{Theorem}
\newtheorem{corollary}[theorem]{Corollary}
 \newtheorem{lemma}[theorem]{Lemma}
\newtheorem{proposition}[theorem]{Proposition}
\numberwithin{equation}{section}
\renewcommand{\footnote}{\endnote}
\newcommand{\ignore}[1]{}\makeglossary
\begin{document}
\bibliographystyle{amsplain}

\title[The Chebotarev invariant]{The Chebotarev invariant for  direct products of nonabelian finite simple groups}
\author{Jessica Anzanello}
\address{J. Anzanello, Universit\`a di Padova, Dipartimento di Matematica ``Tullio Levi-Civita'', Via Trieste 63, 35121 Padova, Italy}
\email{jessica.anzanello@studenti.unipd.it}
\author{Andrea Lucchini}
\address{A. Lucchini, Universit\`a di Padova, Dipartimento di Matematica ``Tullio Levi-Civita'', Via Trieste 63, 35121 Padova, Italy}
\email{lucchini@math.unipd.it}
\author{Gareth Tracey}
\address{G. Tracey, Mathematics Institute , University of Warwick, Coventry, CV4 7AL}
\email{Gareth.Tracey@warwick.ac.uk}

\subjclass{20D10, 20F05, 05C25}

\begin{abstract}
A subset $\{g_1, \ldots , g_d\}$ of a finite group $G$  invariably generates $G$ if
$\{g_1^{x_1}, \ldots , g_d^{x_d}\}$ generates $G$ for every choice of $x_i \in G$. The Chebotarev invariant $C(G)$ of $G$ is the expected value of the random variable $n$ that is minimal
subject to the requirement that $n$ randomly chosen elements of $G$ invariably generate $G$. In this paper, we show that if $G$ is a nonabelian finite simple group, then $C(G)$ is absolutely bounded. More generally, we show that if $G$ is a direct product of $k$ nonabelian finite simple groups, then $C(G)=\log{k}/\log{\alpha(G)}+O(1)$, where $\alpha$ is an invariant completely determined by the proportion of derangements of the primitive permutation actions of the factors in $G$. It follows from the proof of the Boston-Shalev conjecture that $C(G)=O(\log{k})$. We also derive sharp bounds on the expected number of generators for $G$. 
\end{abstract}

\maketitle

\section{Introduction}

In this paper, we are interested in \textit{generation} and \textit{invariable generation} of finite groups. While the first has a long and rich history in finite group theory, the latter is still quite new and unexplored. Invariable generation was introduced in the early nineties by Dixon, with motivation from computational Galois theory (see \cite{Dix92}). Following his work, we say that a subset $\{g_1,\dots,g_t\}$ of a finite group $G$ \textit{invariably generates} $G$ if $\{g{_1}^{x_1},\dots,g{_t}^{x_t}\}$ generates $G$ for every $t$-tuple $(x_1,\dots,x_t) \in G^t$.
Our motivating questions are the following:
{\sl{if we choose random elements from a finite group $G$ independently, with replacement, and with the uniform distribution, how many should we expect to pick until the elements chosen generate $G$? And how many to invariably generate $G$?}}

We can formalise the quantities needed to answer the question above in the following way. Let $G$ be a non-trivial finite group and let $x=(x_n)_{n \in \N}$ be a sequence of independent, uniformly distributed, $G$-valued random variables. 
We may define two random variables (two waiting times) by:

\begin{enumerate}
	\item   $\tau_{G} = \min\{n \geq 1: \langle x_1, \dots,x_n \rangle=G\} \in [1,+\infty]$;
	
	\item  $\tau_{I,G} = \min\{n \geq 1: \{x_1, \dots, x_n\} \text{ invariably generates } G\} \in [1,+\infty]$.
\end{enumerate}

We denote the expectations of $\tau_G$ and $\tau_{\text{I},G}$ respectively with $e_1(G)$ and $C(G)$. The latter is known as the \textit{Chebotarev invariant} of $G$, and was firstly introduced by Kowalski and Zywina in 2012 \cite{KZ}, taking its name from its relation with the \textit{Chebotarev density theorem}.

Given a finite group $G$, one may ask how differently $e_1(G)$ and $C(G)$ behave: clearly, we have $e_1(G) \le C(G)$ and, if $G$ is abelian, $C(G)=e_1(G)$. Moreover, Kantor, Lubotzky and Shalev \cite{KanLubSha} showed that a finite group is nilpotent if and only if every generating
set is an invariable generating set. But the difference $C(G)-e_1(G)$ is not small in general. For example, Kowalski and Zywina have proved the following:
let $q$ be a prime power, and consider the affine group $G_q \coloneqq \mathbb{F}_q \rtimes \mathbb{F}_q^{*}$, then $C(G_q)\sim \sqrt{|G_q|}$ as $q \rightarrow \infty$.
On the other hand, Lubotzky \cite{Lub03} and, independently, Detomi and Lucchini \cite{LuDe}, settling a conjecture by Pak, have proved that, for any finite group $G$, 
$e_1(G)=d(G)+O(\log \log|G|)$ (here $d(G)$ denotes the smallest cardinality of a generating set of $G$).
It follows in particular that, even in the metabelian case, $C(G)$ and $e_1(G)$ behave quite differently.

To conclude this overview, we remark that the behaviour of $C(G_q)$, where $G_q$ is the affine group of order $q(q-1)$, led Kowalski and Zywina to conjecture that $C(G)=O(\sqrt{|G|})$ for every finite group $G$. 
Progress on the conjecture was first made by Kantor, Lubotzky and Shalev in \cite{KanLubSha}, where it was shown that $C(G)=O(\sqrt{|G|}\log|G|)$, and the conjecture was confirmed by Lucchini in \cite[Theorem 1]{LuccCheb}:
{\sl{there exists an absolute constant $\beta$ such that $C(G) \leq \beta \sqrt{|G|}$ whenever $G$ is a finite group}}. Moreover, in \cite{LuccGarCheb}, Lucchini and Tracey showed that $\beta \le 5/3$ whenever $G$ is soluble, and that this is best possible, with equality if and only if $G=C_2 \times C_2$. Furthermore, in the same paper, they showed that, in general, for each $\epsilon > 0$, there exists a constant $c_{\epsilon}$ such that $C(G) \le (1+\epsilon) \sqrt{|G|}+c_{\epsilon}$.

In this paper, we derive upper bounds for $e_1(G)$ and for $C(G)$, where $G$ is a direct product of $k$ nonabelian finite simple groups. In both cases, we give a linear bound for these expectations in terms of the logarithm of the number of direct factors. Moreover, we prove that, if $S$ is a finite nonabelian simple group, then $C(S) \le \gamma_1$, for an absolute constant $\gamma_1$. An analogous result for $e_1(S)$, with a precise estimate of the absolute constant, was obtained by the second author in \cite[Theorem 7]{e1Alt6}.

The result for $e_1(G)$ could be deduced from \cite[Theorem 20]{LuDe}, but we prefer to give an alternative direct proof, which also allows to obtain a more precise statement.

\begin{theorem}\label{maine}
	Let $G=T_1 \times \dots \times T_k$ be a direct product of $k \ge 2$ nonabelian finite simple groups. Then
	\begin{equation*}
		e_1(G) \leq \max_{1 \leq i \leq k}{\frac{\log{k}}{\log{l(T_i)}}}+6,
	\end{equation*}
	where $l(T_i)$ denotes the minimum index of a proper subgroup of $T_i$.
\end{theorem}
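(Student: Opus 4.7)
My plan is to bound $e_1(G)$ directly via the tail-sum identity $e_1(G) = \sum_{n \ge 0}\mathbb{P}(\tau_G > n)$, splitting the sum at a threshold $N$ chosen just above $\log k/\log l$, where $l := \min_{1 \le i \le k} l(T_i)$. Concretely, with $N := \lceil \log k/\log l\rceil + 2$, the first $N$ terms contribute at most $N \le \log k/\log l + 3$, so the problem reduces to showing that the tail $\sum_{n \ge N}\mathbb{P}(\tau_G > n)$ is bounded by at most $3$. For this I would employ the classical union bound
\[
\mathbb{P}(\tau_G > n) \le \sum_{M \le_{\max} G}[G:M]^{-n},
\]
valid because an $n$-tuple of uniform i.i.d.\ elements fails to generate $G$ iff it lies in some maximal subgroup $M$, an event of probability $[G:M]^{-n}$. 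Summing the geometric tails and using $[G:M] \ge l \ge 5$ further reduces the task to bounding $\sum_{M \le_{\max} G}[G:M]^{-N}$ by a small absolute constant.

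The next step is to invoke the well-known description of the maximal subgroups of $G = T_1 \times \cdots \times T_k$: each is either of \emph{product type}, $\pi_i^{-1}(M)$ with $M$ maximal in some $T_i$ (of index $[T_i:M] \ge l(T_i)$), or of \emph{diagonal type}, the graph of an isomorphism $\phi\colon T_i \to T_j$ for some $i < j$ with $T_i \cong T_j$ (of index $|T_i|$). This yields the decomposition
\[
\sum_{M \le_{\max} G}[G:M]^{-N} = \sum_{i=1}^k\sum_{M \le_{\max} T_i}[T_i:M]^{-N} + \sum_{\substack{1 \le i < j \le k \\ T_i \cong T_j}} |\aut(T_i)| \cdot |T_i|^{-N}.
\]
For the product-type term I would use the CFSG-based Liebeck--Shalev estimate that a nonabelian finite simple group has $O(j)$ maximal subgroups of index $j$, so that $\sum_{M \le_{\max} T_i}[T_i:M]^{-N} = O(l(T_i)^{2-N})$, giving a total of order $k\, l^{2-N} = O(1)$ by the choice of $N$. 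For the diagonal-type term I would use the crude bound $|\aut(T_i)| \le |T_i|^2$ together with the uniform inequality $|T| \ge l(T)^2$ (valid for every nonabelian finite simple $T$), which yields a total of order $k^2 l^{4-2N} = O(1)$, again by the choice of $N$.

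The main technical obstacle is the careful bookkeeping of absolute constants needed to shave the additive term down to the value $6$ claimed in the theorem. The dominant contribution to the tail comes from the product-type maximals in the factor $T_i$ of smallest minimum index, and the $\log k/\log l$ term arises precisely from balancing $kl^{2-N}$ against a constant; tracking the Liebeck--Shalev constant explicitly, and using that the geometric-series factor $1/(1-[G:M]^{-1})$ is at most $\tfrac{5}{4}$, one then obtains
\[
e_1(G) \le N + \tfrac{5}{4}\sum_{M \le_{\max} G}[G:M]^{-N} \le \left(\log k/\log l + 3\right) + 3 = \log k/\log l + 6,
\]
matching the stated bound.
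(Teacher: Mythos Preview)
Your strategy is essentially the paper's, with the same structural inputs: the product/diagonal classification of maximal subgroups of $G$, a polynomial bound on $m_j(T)$ for simple $T$, and the inequalities $|\aut(T)|\le |T|^2$ and $|T|\ge l(T)^2$. The difference is packaging: the paper does not run the tail sum directly but instead quotes the relation $e_1(G)\le \lceil \mathcal{M}(G)\rceil+3$, where $\mathcal{M}(G)=\sup_{n\ge 2}\log m_n(G)/\log n$, and then proves the single estimate $\mathcal{M}(G)\le \log k/\log l+2$ by bounding $m_n(G)\le kn^2$ for $l\le n<l^2$ and $m_n(G)\le k^2n^2$ for $n\ge l^2$. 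Your direct computation is, in effect, a re-derivation of that black box specialised to this $G$.

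One point you should revisit: the paper uses the bound $m_j(T)\le j^2$ for nonabelian simple $T$ (citing Lubotzky and Detomi--Lucchini), not $m_j(T)=O(j)$. I am not aware of a Liebeck--Shalev result giving $m_j(T)=O(j)$ uniformly; equivalently, one would need an absolute bound on the number of conjugacy classes of maximal subgroups of a given index, and this does not seem to be in the literature in that form. With the available $j^2$ bound, your product-type tail becomes $k\sum_{j\ge l}j^{2-N}$ rather than $k\sum_{j\ge l}j^{1-N}$, so the threshold $N=\lceil\log k/\log l\rceil+2$ is too small (for $N=3$ the inner sum is a divergent harmonic tail), and pushing $N$ up by one threatens the additive constant $6$. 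The cleanest fix is exactly what the paper does: absorb the tail-sum bookkeeping into the quoted inequality $e_1(G)\le\lceil\mathcal{M}(G)\rceil+3$ and only prove the $m_n(G)$ estimate directly.
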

	Here and throughout this paper, the symbol $\log$ denotes the logarithm to the base $2$. In Section 4, we will show that for all finite nonabelian simple groups $T$, we have $e_1(T^k) = \max_{1 \leq i \leq k}{\frac{\log{k}}{\log{l(T_i)}}}+c(k)$, where $-3\le c(k)\le 6$. Thus, Theorem \ref{maine} is close to best possible.

The main result of the paper is that a similar statement holds for invariable generation, in which the role of $l(T_i)$  is played by an invariant from the theory of derangements. Indeed, for a finite simple group $S$ and an $S$-set $\Omega$, we denote by $\delta_{\Omega}(S)$ the proportion of derangements in $S^{\Omega}$. (Here, and throughout the paper, $S^{\Omega}$ will denote the image of the induced action of $S$ on $\Omega$). We write $\delta(S)$ for the minimal value of $\delta_{\Omega}(S)$ as $\Omega$ runs over all primitive $S$-sets, and we define $\alpha(S):=(1-\delta(S))^{-1}$. Alternatively, denoting with $\tilde M$ the union of the $S$-conjugates of $M,$ $\alpha(S)$ is the the minimum of $|S|/|\w{M}|$ as $\w{M}$ ranges over the maximal subgroups of $S$. The Boston--Shalev conjecture, proved by Fulman and Guralnick \cite{FGC2C3}, states that there exists an absolute constant $\alpha_0>0$ such that $\alpha(S)>1+\alpha_0$ for all nonabelian finite simple groups $S$. Our result can now be stated as follows.  
\begin{theorem}\label{maineinv}
	Let $G=T_1 \times \dots \times T_k$ be a direct product of $k \ge 2$ nonabelian finite simple groups. Then
	\begin{equation*}
		C(G) \leq \max_{1 \leq i \leq k}{\frac{\log{k}}{\log{\alpha(T_i)}}}+\gamma,
	\end{equation*}
	for some absolute constant $\gamma$.
\end{theorem}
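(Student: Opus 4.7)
My plan is to bound $C(G) = \sum_{n \geq 0} P(\tau_{\text{I},G} > n)$ by a union bound over the maximal subgroups of $G$, splitting the contribution according to the two types of maximal subgroups of a direct product of nonabelian simple groups. Let $N := \lceil \max_{i} \log k / \log \alpha(T_i) \rceil$; the plan is to show that the tail $\sum_{n \geq N} P(\tau_{\text{I},G} > n) = O(1)$, which yields $C(G) \leq N + O(1)$. Since $\tau_{\text{I},G} > n$ iff some maximal subgroup $M$ of $G$ contains a conjugate of each $x_\ell$, one has $P(\tau_{\text{I},G} > n) \leq \sum_{[M]} (|\widetilde{M}|/|G|)^n$, summed over $G$-conjugacy classes of maximal subgroups $M$, where $\widetilde{M}$ is the union of the $G$-conjugates of $M$.

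The maximal subgroups of $G = T_1 \times \cdots \times T_k$ split into two classes: \emph{product-type} $T_1 \times \cdots \times N_i \times \cdots \times T_k$ with $N_i$ maximal in $T_i$; and \emph{diagonal} $D_{i,j,\phi} = \{x \in G : \pi_j(x) = \phi(\pi_i(x))\}$ for $T_i \cong T_j$ and an isomorphism $\phi : T_i \to T_j$, parametrized up to $G$-conjugacy by $|\out(T_i)|$ for each unordered pair $\{i,j\}$. For a product-type $M$, $|\widetilde{M}|/|G| = 1 - \delta_{T_i/N_i} \leq \alpha(T_i)^{-1}$. A refinement of the Fulman--Guralnick proof of the Boston--Shalev conjecture (combined with bounds on the number of conjugacy classes of maximal subgroups of a simple group) provides absolute constants $c_0, n_0$ such that $\sum_{N \text{ maximal in } T}(1 - \delta_{T/N})^n \leq c_0 \alpha(T)^{-n}$ for every nonabelian simple $T$ and every $n \geq n_0$. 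Summing over $i$, the type-(a) contribution to $P(\tau_{\text{I},G} > n)$ is at most $c_0 k \alpha_{\min}^{-n}$ where $\alpha_{\min} := \min_i \alpha(T_i)$; since $\alpha_{\min}^{-N} \leq k^{-1}$, the type-(a) tail sum is $O(1)$.

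For a diagonal $D_{i,j,\phi}$, a direct calculation shows $|\widetilde{D_{i,j,\phi}}|/|G| = d(T_i)$, where
\[ d(T) := \sum_{C} (|C|/|T|)^2 \]
is the probability that two independent uniform random elements of $T$ are $T$-conjugate (the sum ranges over $T$-conjugacy classes $C$); in particular this is independent of the coset of $\phi$. The total type-(b) contribution is therefore bounded by $\binom{k}{2} \cdot \max_i |\out(T_i)| \cdot \max_i d(T_i)^n$. The crucial quantitative lemma is the inequality $d(T) \leq \alpha(T)^{-2}$ (with a small correction to absorb the $|\out(T)|$ factor) uniformly over all nonabelian simple groups $T$. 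For large $|T|$ this is straightforward: one has $d(T) = O(|T|^{-\varepsilon})$ since conjugacy classes of simple groups have average size $\geq |T|^{1-\varepsilon}$, while $|\out(T)| = O(\log |T|)$ and $\alpha(T) \geq 1 + \alpha_0$ is bounded below by Boston--Shalev. For the finitely many small simple groups with small $\alpha(T)$, such as $\alt(5), \psl_2(7), \alt(6)$, the inequality is verified by direct computation from the character tables. Given the lemma, $d_{\max}^N \leq k^{-2}$, and the type-(b) tail is $O(1)$.

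The main obstacle is the diagonal analysis: the count $\binom{k}{2}|\out(T)|$ of diagonal conjugacy classes makes the union bound essentially tight, so the quantitative lemma $d(T) \leq \alpha(T)^{-2}$ must be established uniformly across all nonabelian simple groups, including explicit case-by-case verification for the small simple groups, where $\alpha(T)^{-1}$ is comparable in size to $d(T)^{1/2}$ and where no soft asymptotic argument is available.
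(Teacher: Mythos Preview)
Your strategy is essentially the paper's: the same union bound over conjugacy classes of maximal subgroups, the same product/diagonal split, and the same reliance on Fulman--Guralnick derangement estimates for the product-type terms together with a centraliser computation for the diagonal terms. The only real differences are in how the key lemmas are packaged.

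For product type, you assert $\sum_{N}(1-\delta_{T/N})^{n}\le c_{0}\,\alpha(T)^{-n}$ for $n\ge n_{0}$; the paper instead proves the closely related statement that the number $\mn_{n}(T)$ of conjugacy classes of maximal subgroups with $\lfloor|T|/|\widetilde N|\rfloor=n$ is at most $Cn^{200}$. Your formulation is cleaner to use, but it is not a citable black box: establishing it requires precisely the case analysis (alternating via Eberhard--Ford--Green, classical via the Fulman--Guralnick bounds on $k$-space stabilisers and on the $\Cl_{2}$--$\Cl_{9}$ classes, exceptional via Liebeck--Seitz and Liebeck--Saxl--Seitz) that occupies the bulk of the paper's Section~3. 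Calling this ``a refinement of Fulman--Guralnick'' hides exactly the work the paper carries out.

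For diagonal type, you correctly identify $|\widetilde D_{i,j,\phi}|/|G|=d(T)$ and aim for $d(T)\le\alpha(T)^{-2}$. The paper proves the marginally weaker $d(T)\le\max\{1/4,\alpha(T)^{-2}\}$ by observing that every nonabelian simple group other than $A_{5}$ has all centralisers of order at least~$4$, and checking $A_{5}$ directly. To absorb $|\out(T)|$, the paper notes that $|\out(T)|=O(r\log q)$ while $1/d(T)\ge(q-1)^{r}/r$, so $|\out(T)|$ is polynomially bounded in $1/d(T)$ and can be folded into the exponent~$200$; this is somewhat more direct than your asymptotic-plus-finite-check route, though both are valid.
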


We will prove in Section 4 that for all $0<\epsilon<1$ there exists an absolute constant $C_{\epsilon}>0$ such that $C(T^k)\geq (1-\epsilon){\frac{\log{k}}{\log{\alpha(T)}}}-C_{\epsilon}$ for all nonabelian finite simple groups $T$ and all $k\geq 1$. Thus, analogously to Theorem \ref{maine}, the bound in Theorem \ref{maineinv} is close to best possible. This also proves that the invariants $C(T^k)$ and $\alpha(T)$ are inextricably linked.  

Our main corollary of Theorem \ref{maineinv} is as follows, and is an immediate consequence of the work of Fulman and Guralnick mentioned above.
\begin{corollary}\label{tceb}
There exist absolute constants $\gamma_1$ and $\gamma_2$ such that
\begin{enumerate}
    \item[\upshape(i)] $C(S) \le \gamma_1$ whenever $S$ is a nonabelian finite simple group; and
    \item[\upshape(ii)] $C(G) \leq \gamma_2 \log{k}$ whenever $G=T_1 \times \cdots \times T_k $ is a direct product of $k \ge 2$ nonabelian finite simple groups $T_i$.
\end{enumerate}
\end{corollary}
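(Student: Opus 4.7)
The plan is to deduce both statements directly from Theorem \ref{maineinv} together with the Boston--Shalev conjecture of Fulman and Guralnick. By that conjecture there exists an absolute constant $\alpha_0>0$ such that $\alpha(T)\geq 1+\alpha_0$ for every nonabelian finite simple group $T$. Setting $\eta:=\log(1+\alpha_0)>0$, one therefore has $\log\alpha(T)\geq \eta$, uniformly over all such $T$.

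First I would prove (ii). For $G=T_1\times\cdots\times T_k$ with $k\geq 2$, Theorem \ref{maineinv} yields
$$C(G)\leq \max_{1\leq i\leq k}\frac{\log k}{\log\alpha(T_i)}+\gamma \leq \frac{\log k}{\eta}+\gamma.$$
Since $k\geq 2$ forces $\log k\geq 1$, the additive $\gamma$ can be absorbed into the coefficient of $\log k$, and taking $\gamma_2:=\eta^{-1}+\gamma$ gives $C(G)\leq \gamma_2\log k$, as required.

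For (i), I would reduce to the case $k=2$ by a monotonicity argument. Apply (ii) to $G:=S\times S$ to obtain $C(S\times S)\leq \gamma_2$. Under the natural coupling in which an i.i.d.\ uniform sequence $((g_n,h_n))_{n\in\N}$ in $S\times S$ projects coordinate-wise to an i.i.d.\ uniform sequence $(g_n)_{n\in\N}$ in $S$, any invariably generating tuple of $S\times S$ projects to an invariably generating tuple of $S$: for any $x_1,\dots,x_n\in S$, choose $y_1,\dots,y_n\in S$ arbitrarily, apply the first projection to $\langle(g_i,h_i)^{(x_i,y_i)}:1\leq i\leq n\rangle=S\times S$, and note that projections of conjugates are conjugates of projections, so $\langle g_i^{x_i}:1\leq i\leq n\rangle=S$. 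Hence $\tau_{I,S}\leq \tau_{I,S\times S}$ on this coupling, so $C(S)\leq C(S\times S)\leq \gamma_2$, and one may take $\gamma_1:=\gamma_2$.

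The argument is essentially routine once Theorem \ref{maineinv} and the Boston--Shalev conjecture are in hand. The only mild subtlety is the monotonicity $C(S)\leq C(S\times S)$ used to derive (i) from the $k\geq 2$ case of Theorem \ref{maineinv}; I do not foresee any significant obstacle.
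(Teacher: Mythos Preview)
Your proof is correct. For part (ii) your argument is exactly what the paper intends: combine Theorem \ref{maineinv} with the Fulman--Guralnick lower bound $\alpha(T)\ge 1+\alpha_0$ and absorb the additive constant using $\log k\ge 1$.

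For part (i) there is a minor methodological difference. The paper treats the corollary as an immediate consequence of Theorem \ref{maineinv} together with Boston--Shalev, implicitly relying on the fact that the \emph{proof} of Theorem \ref{maineinv} is carried out for $k\ge 1$ (so that for $k=1$ the quantity $\beta$ reduces to an absolute constant and one reads off $C(S)\le \beta+\frac{1}{\alpha(\alpha-1)}+3$). You instead work strictly from the \emph{statement} of Theorem \ref{maineinv}, which is formulated for $k\ge 2$, and bridge the gap via the monotonicity $C(S)\le C(S\times S)$ established by your coupling argument. Both routes are short and valid; yours is arguably cleaner in that it does not require the reader to revisit the proof of the theorem, while the paper's route avoids the (easy) coupling lemma altogether.
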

The proof of Theorem \ref{maineinv} is much more difficult then the one of Theorem \ref{maine} and strongly depends on the classification of the finite nonabelian simple groups.

In the last section of the paper we will consider the particular case when $G=T^k$ is the direct power of $k$ copies of a fixed nonabelian finite simple group $T$. The study of this case will show that the estimations given in Theorems \ref{maine} and \ref{maineinv}  are close to best possible, as mentioned above. We will also prove that $C(A_5^k)-e_1(A_5^k)$ tends to infinity with $k$.

\section{An upper bound for {$e_1(G)$}{}}\label{secondsec}

Denote by $m_n(G)$ the number of maximal subgroups of $G$ with index $n$ and let
\begin{equation*}
	\mathcal{M}(G)=\sup_{n \geq 2} \frac{\log{m_n(G)}}{\log{n}}.
\end{equation*}
The invariant $\mathcal{M}(G)$ can be seen as the polynomial degree of the rate of growth of $m_n(G)$. It is roughly equal to $e_1(G)$, and precisely we have (see \cite[Theorem 1.1]{MG:LuccMosc}):
\begin{equation}\label{amp}\lceil \mathcal{M}(G)\rceil-4 \leq e_{1}(G) \leq \lceil \mathcal{M}(G) \rceil + 3.
	\end{equation}
This invariant has been studied for finite and profinite groups by various authors, see for example \cite{MG:LuccMosc}, and the references therein. 
\paragraph{}
Our aim  is to give an estimate of $\mathcal{M}(G)$ for 
\begin{equation}
	\label{G}
	G \cong \prod_{1 \leq i \leq r}T_{i}^{k_i},
\end{equation}
with $T_i$ a nonabelian simple group and $T_i \ncong T_j$ for all $i \neq j$.
Thanks to Goursat's lemma, we have a complete description of the maximal subgroups of $G$. Indeed, the maximal subgroups $M$ of $G$ are all of the form:
	\begin{equation*}
		M=T_1^{k_1} \times \dots \times T_{i-1}^{k_i-1}\times M_i\times T_{i+1}^{k_i+1}\times \dots \times T_{r}^{k_r},
	\end{equation*}
	with $i \in \{1\dots r\}$ and $M_i$ a maximal subgroup of $T_i^{k_i}$, and for $M_i$ we have two possibilities.
	\begin{enumerate}
		\item {\sl{Product type}}: $M_i= T_i\times \dots \times K_i\times \dots \times T_i$, with $K_i$ a maximal subgroup of $T_i$. In this case, $|M_i|=|T_i|^{k_i-1}|K_i|$ and thus $[G:M]=[T_i^{k_i}:M_i]=[T_i:K_i]$.
		\item  {\sl{Diagonal type}}: $M_i=\{(t_1,\cdots,t_j,\cdots,t_k,\cdots,t_{k_i}) \in T_i^{k_i}\mid t_k=t_j^{\phi}\}$,  given $j<k$ and $\phi \in$ Aut($T_i$).
		In this case, $|M_i|=|T_i|^{k_i-1}$ and thus $[G:M]=[T_i^{k_i}:M_i]=|T_i|$.
	\end{enumerate}
	Notice that, by \cite[Theorem 7]{e1Alt6}, if $S$ is a  finite nonabelian simple group, then $$e_1(S)\leq e_1(\alt(6)) \approx 2.494.$$ Hence, it follows from (\ref{amp}) that
$\mathcal{M}(S)  \leq 6$ for every 	
finite nonabelian simple group $S.$
We are now ready for the main result of this section.
\begin{proposition}
	\label{thm1}
	If $G$ is a direct product of nonabelian finite simple groups as in (\ref{G}) and $k=\sum_{i=1}^{r}k_i$, then
	\begin{equation}
		\mathcal{M}(G) \leq \max_{1 \leq i \leq r}{\frac{\log{k}}{\log{l(T_i)}}}+2,
	\end{equation}
	where $l(T_i)$ denotes the minimum index of a proper subgroup of $T_i$.
\end{proposition}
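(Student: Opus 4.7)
The plan is to apply the Goursat-type description of the maximal subgroups of $G=\prod_i T_i^{k_i}$ recalled just before the proposition statement and, for each $n\ge 2$, to estimate the number $m_n(G)$ of maximal subgroups of index $n$. Writing $\mu:=\max_i \log k/\log l(T_i)$, my goal is a uniform bound of the form $m_n(G)\le C n^{\mu+2}$ for an absolute constant $C$ and every $n\ge 2$, from which $\mathcal{M}(G)\le \mu+2$ will follow after $C$ is absorbed by restricting to $n\ge 5$ (nonabelian finite simple groups have no proper subgroup of index less than $5$, so $m_n(G)=0$ otherwise). I split $m_n(G)=A_n+B_n$ where $A_n$ counts product-type and $B_n$ diagonal-type maximal subgroups; Goursat gives
\[
A_n \;\le\; k\cdot\max_i m_n(T_i), \qquad B_n \;\le\; \binom{k}{2}\,|\aut(T_i)|
\]
for the \emph{unique} $i$ with $|T_i|=n$ (if any; the $T_i$ are pairwise non-isomorphic), and $B_n=0$ otherwise.

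To bound $A_n$: whenever $A_n>0$, some factor $T_i$ has a maximal subgroup of index $n$, so $n\ge l(T_i)\ge \min_j l(T_j)$, whence $k\le n^{\log k/\log \min_j l(T_j)} = n^{\mu}$. Combined with a uniform bound of the form $m_n(T)\le n^{c_1}$ with $c_1\le 2$ for every nonabelian finite simple $T$ (the key technical input, discussed below), this yields $A_n\le n^{\mu+c_1}\le n^{\mu+2}$.

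To bound $B_n$ in the case $n=|T_i|$, I will invoke two classification-based inputs: the inequality $|T|\ge l(T)^2$ for every nonabelian finite simple group $T$ (a routine verification via CFSG) and the trivial estimate $|\aut(T)|\le |T|^2$ (which follows from $|\out(T)|\le |T|$). The first gives
\[
k^2 \;=\; n^{2\log k/\log |T_i|} \;\le\; n^{\log k/\log l(T_i)} \;\le\; n^{\mu},
\]
while the second gives $|\aut(T_i)|\le n^2$, so $B_n\le n^{\mu+2}/2$.

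Combining, $m_n(G)\le \tfrac{3}{2}n^{\mu+2}$, and the constant $\tfrac{3}{2}$ is absorbed into the $+2$ by restricting to $n\ge 5$ and, if necessary, slightly sharpening the second bound (using, e.g., the well-known $|\out(T)|\ll |T|$ for simple $T$). The hard part is the uniform estimate $m_n(T)\le n^2$: since $N_T(H)=H$ for a maximal subgroup $H$ of a simple group $T$, each conjugacy class of maximal subgroups of index $n$ has size exactly $n$, so it suffices to bound the number of such conjugacy classes by $n$. This is the deep ingredient, and I would appeal to the CFSG-based theory of maximal subgroups of finite simple groups (Aschbacher--Guralnick, Liebeck--Shalev and others) for it. The direct application of (\ref{amp}) together with $e_1(T)\le e_1(\alt(6))$ only yields the weaker $\mathcal{M}(T)\le 6$; getting down to exponent $2$ genuinely requires the sharper estimate.
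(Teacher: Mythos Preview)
Your approach is essentially the same as the paper's: the same Goursat decomposition into product- and diagonal-type maximal subgroups, and the same three inputs, namely $m_n(T)\le n^2$ for nonabelian simple $T$ (the paper cites \cite[Theorem 1.3]{Lub03} or \cite[Theorem 21]{LuDe2} for this), $l(T)^2\le |T|$, and $|\aut(T)|\le |T|^2$.

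There is, however, a small but genuine bookkeeping gap in your final step. From $m_n(G)\le \tfrac{3}{2}n^{\mu+2}$ you get $\log m_n(G)/\log n\le \mu+2+\log(3/2)/\log n$, and since $n\ge 5$ this only yields $\mathcal{M}(G)\le \mu+2+\log(3/2)/\log 5\approx \mu+2.25$; the factor $3/2$ is \emph{not} absorbed by $n\ge 5$, and sharpening $|\out(T)|$ does not cure this either. The paper avoids the stray constant by a two-case split: if $n<\min_i l(T_i)^2$ then $n<|T_i|$ for every $i$, so $B_n=0$ and $m_n(G)=A_n\le kn^2\le n^{\mu+2}$; if $n\ge \min_i l(T_i)^2$ then one bounds the \emph{sum} directly,
\[
A_n+B_n\;\le\; kn^2+\tbinom{k}{2}n^2\;\le\; k^2n^2,
\]
and now the hypothesis $n\ge \min_i l(T_i)^2$ gives $k^2\le n^{\mu}$, hence $m_n(G)\le n^{\mu+2}$ on the nose. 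Replacing your last paragraph with this case split fixes the argument and matches the paper exactly.
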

	
	\begin{proof}
		Fix $n\in\mathbb{N}$, and let $\Lambda_n:=\{i\mid m_n(T_i)\geq 1\}$, $\Sigma_n:=\{i\mid |T_i|=n\}$. Note that $\Lambda_n\cap \Sigma_n=\emptyset$. We will use the above description of the maximal subgroups of $G$ to derive upper bounds on $m_n(G)$.
        If $n<\min_{1\le i\le r} l(T_i)$, then $m_n(G)=0$, so assume that $n\geq \min_{1\le i\le r} l(T_i)$. 
  
  Suppose first that $n<\min_{1\le i\le r} l(T_i)^2$. As noted in \cite[Section 5.2, page 178]{KL}, it follows from the classification theorem that every finite nonabelian simple group has a proper subgroup of order at least $\sqrt{|T|}$ (one can also check this by inspecting the known values of $l(T)$ given by \cite{KL,V1,V2,V3,AtlasOn}).
        Thus, $l(T_i)\le \sqrt{|T_i|}$ for all nonabelian finite simple groups $T_i$. Since $n<\min_{1\le i\le r} l(T_i)^2$, we deduce that  $\Sigma_n=\emptyset$. Thus, $m_n(G)=\sum_{i\in\Lambda_n}k_i m_n(T_i)\le kn^2$, where we used that $m_n(T_i)\le n^2$ (see \cite[Theorem 1.3]{Lub03} or \cite[Theorem 21]{LuDe2}).

    Finally, assume $n\geq \min_{1\le i\le r} l(T_i)^2$. Then
		\begin{equation*}
			\begin{split}
				m_n(G)&=\sum_{i\in\Lambda_n}k_i m_n(T_i)+\sum_{i\in\Sigma_n}\binom{k_i}{2}|\text{Aut}(T_i)| \\
                &\le \sum_{i\in\Lambda_n}k_in^2 +\sum_{i\in\Sigma_n}\binom{k_i}{2}n^2 \le k^2n^2\
			\end{split}
		\end{equation*}
		since  $|\aut(T_i)| \leq |T_i|^2$ by the classification of finite simple groups. In particular
		\begin{equation*} 
			\frac{\log{m_n(G)}}{\log{n}} \leq \frac{2\log{k}}{\log{n}}+2\le \frac{2\log{k}}{\min_{1\le i\le r} \log{(l(T_i)^2)}}+2=\frac{\log{k}}{\min_{1\le i\le r} \log{l(T_i)}}+2.
		\end{equation*}
The result follows.
\end{proof}

\begin{proof}[Proof of  Theorem \ref{maine}]It follows immediately combining the previous proposition with (\ref{amp}).
\end{proof}

\section{An upper bound for {${C}(G)$}{}}

In this section, we consider the case of \textit{invariable generation}. For every maximal subgroup $M$ of a group $G$, let $\w{M}=\cup_{g \in G} M^g$ denote the union of the $G$-conjugates of $M$. Clearly, $\w{M}_1=\w{M}_2$ if the maximal subgroups $M_1$ and $M_2$ are conjugate in $G$.
Let $\mathcal{M}:=\mathcal{M}(G)$ be a set of representatives of conjugacy classes of maximal subgroups of G. The following lemma is straightforward.

\begin{lemma}
	\label{inv}
	A subset $\{g_1, \dots, g_t\}$ of a finite group $G$ invariably generates $G$ if and only if $\{g_1, \dots, g_t\} \nsubseteq \w{M}$ for all $M \in \mathcal{M}(G)$.
\end{lemma}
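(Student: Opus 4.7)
The plan is to prove both directions directly from the definitions, using only that conjugate maximal subgroups have the same union of conjugates (so $\w{M}$ depends only on the conjugacy class of $M$) and the standard fact that every proper subgroup of a finite group lies inside a maximal subgroup.

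For the forward direction, I would argue by contrapositive. Suppose $\{g_1,\dots,g_t\}\subseteq \w{M}$ for some $M\in\mathcal{M}(G)$. By definition of $\w{M}=\bigcup_{g\in G}M^g$, for each $i$ there exists $x_i\in G$ with $g_i\in M^{x_i}$, equivalently $g_i^{x_i^{-1}}\in M$. Setting $y_i:=x_i^{-1}$, the conjugate tuple $\{g_1^{y_1},\dots,g_t^{y_t}\}$ lies entirely inside the proper subgroup $M$, so it cannot generate $G$. Hence $\{g_1,\dots,g_t\}$ does not invariably generate $G$.

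For the reverse direction, assume $\{g_1,\dots,g_t\}$ does not invariably generate $G$; then there exist $x_1,\dots,x_t\in G$ such that $H:=\langle g_1^{x_1},\dots,g_t^{x_t}\rangle$ is a proper subgroup of $G$. Choose any maximal subgroup $N$ of $G$ containing $H$, and write $N=M^g$ with $M\in\mathcal{M}(G)$ and $g\in G$. For each $i$ we have $g_i^{x_i}\in N$, hence $g_i\in N^{x_i^{-1}}=M^{gx_i^{-1}}\subseteq \w{M}$. Since $\w{N}=\w{M}$ depends only on the conjugacy class of $N$, we conclude $\{g_1,\dots,g_t\}\subseteq \w{M}$ with $M\in\mathcal{M}(G)$, as required.

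There is no real obstacle here; the only small point that needs explicit mention is that the union $\w{M}=\bigcup_{g\in G}M^g$ is invariant under replacing $M$ by any $G$-conjugate, which justifies reducing from an arbitrary maximal subgroup to a chosen representative in $\mathcal{M}(G)$.
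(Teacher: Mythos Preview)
Your proof is correct and is exactly the standard argument the paper has in mind; the paper itself omits the proof, simply declaring the lemma ``straightforward''. There is nothing to add or compare.
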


By definition, 
\begin{equation*}
	C(G)= \sum_{t=0}^{\infty}\mathbb{P}(\tau_{I,G} > t)=
	\sum_{t=0}^{\infty}(1-\mathbb{P}_{I}(G,t)),
\end{equation*}
where $\mathbb{P}_{I}(G,t)$ denotes the probability that $t$ randomly chosen elements of $G$ invariably generate $G$.
As noted in Lemma \ref{inv}, a subset $\{g_1, \dots, g_t\}$ fails to invariably generate $G$ if and only if it is contained in $\w{M}$, for some maximal subgroup $M \in \mathcal{M}$. Hence,
\begin{equation*}
	1-\mathbb{P}_{I}(G,t) \leq \sum_{M \in \mathcal{M}} \left( \frac{|\widetilde M|}{|G|} \right)^{t}.
\end{equation*}
For a non-negative integer $n$, let $\mn_{n}(G)$ denote the number of conjugacy classes of maximal subgroups $M$ of $G$ satisfying $\lfloor |G|/|\widetilde{M}|\rfloor=n$. Our strategy for proving Theorem \ref{maineinv} is to prove that $\mn_{n}(G)$ is polynomial in $n$ whenever $G$ is a nonabelian finite simple group. 
Our claim is trivial for sporadic groups, so we need only prove that $\mn_{n}(G)$ has a polynomial bound in $n$ when $G$ is an alternating, classical or exceptional simple group.

\subsection{Almost simple groups with alternating socle}

We start by dealing with the alternating case. For this purpose, we require a theorem of Eberhard, Ford and Green \cite{EbForGr}. Following their notation, we define the constant 
$$\delta:=1-\frac{1+\ln{\ln{2}}}{\ln{2}} \approx 0.08607.$$

Their result is as follows.

\begin{theorem}[{\cite[Theorem 1]{EbForGr}}]
	
	\label{EFGTheorem} 
	For positive integers $r$ and $k$ with $1\le k\le r/2$, let $i(r,k)$ denote the proportion of elements of the symmetric group $S_{r}$ which fix a $k$-set (set-wise). There are absolute constants $D_{1}$ and $D_{2}$ such that 
	$$D_{2}k^{-\delta}(1+\log{k})^{-3/2}\le i(r,k)\le D_{1}k^{-\delta}(1+\log{k})^{-3/2}.$$
\end{theorem}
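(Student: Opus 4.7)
The plan is to adapt the Erdős--Ford--Tenenbaum framework for divisors of integers to the additive setting of cycle lengths, noting that $\delta=1-(1+\ln\ln 2)/\ln 2$ is precisely the Erdős--Ford--Tenenbaum constant governing the density of integers with a divisor in a dyadic range. A permutation $\sigma\in S_r$ fixes a $k$-set set-wise if and only if the multiset of cycle lengths of $\sigma$ admits a sub-multiset $C$ with $\sum_{c\in C}c=k$; writing $R(\sigma,k)$ for the number of such sub-multisets, the task is to estimate $\Pr_{\sigma\in S_r}[R(\sigma,k)\ge 1]$. The first step is to pass to a Poissonised model: by Arratia--Barbour--Tavaré (or directly via the Feller coupling), the cycle-counts $c_j(\sigma)$ of a uniform $\sigma\in S_r$ can be coupled, for $j$ up to a suitable cutoff depending on $k$, with independent Poisson variables $Z_j$ of mean $1/j$. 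This turns the problem into a question about subset-sums in an Ewens-type point process, which is amenable to generating-function and large-deviation methods because of the clean form $\mathbb{E}\prod_j(1+u_j)^{Z_j}=\exp(\sum_j u_j/j)$.

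For the \emph{upper bound}, I would bucket the cycle lengths dyadically into scales $I_i=[2^i,2^{i+1})$ and, for each admissible profile of contributions $k=\sum_i s_i$ (with $s_i=\sum_{c\in C\cap I_i}c$), use a conditional second-moment estimate to bound the probability that some sub-multiset realises that profile and sums to $k$. Summing over profiles reduces the problem to an entropy-type optimisation in the variables $\log(s_i/k)$, whose value is exactly $\delta$; a union bound over scales then loses only an extra $(\log k)^{-3/2}$ factor, coming from the local central limit theorem for the top-scale contribution. For the \emph{lower bound}, I would construct an explicit family of "balanced" cycle profiles--permutations whose cycle lengths contain, in each dyadic scale up to $\log k$, roughly the expected Poisson number of cycles--and show that, conditional on this event (which has probability of the right order $k^{-\delta}(\log k)^{-3/2}$ by Ford's analysis of Hooley's $\Delta$-function transferred to the Ewens setting), the subset-sum distribution of the cycle lengths is sufficiently smooth near $k$ for a local limit theorem to guarantee $R(\sigma,k)\ge 1$ with positive probability.

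The main obstacle is matching the constants in the two bounds, i.e.\ pinning down the exponent $\delta$ rather than merely $k^{-\delta+o(1)}$. This requires the fine multi-scale "anatomy" of subset-sums analogous to Ford's treatment of integer divisors: one must rule out cycle profiles where subset-sums cluster too tightly to hit every value near $k$ in the lower bound, and control the overcounting of sub-multisets $C$ that realise the same sum in the upper bound. A secondary technical point is to uniformise the coupling error between $S_r$ and the Poissonised model across the whole range $1\le k\le r/2$, which forces a careful truncation of long cycles (those of length $\gg k$) that contribute trivially to representations of $k$ but affect the coupling constants.
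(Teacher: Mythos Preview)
The paper does not prove this theorem at all: it is quoted verbatim as \cite[Theorem 1]{EbForGr} and used as a black box in the proof of Proposition~\ref{AlternatingCase}. There is nothing to compare your argument against in the present paper.

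That said, your sketch is a faithful outline of the strategy in the cited Eberhard--Ford--Green paper: the reduction to subset-sums of cycle lengths, Poissonisation of cycle counts, dyadic bucketing, and the identification of the exponent $\delta$ with the Erd\H{o}s--Ford--Tenenbaum constant are all genuine ingredients there. The honest gap in your proposal is that the hard work lies precisely in the parts you flag as ``obstacles'': matching upper and lower bounds to pin down the exact exponent $\delta$ and the sharp $(\log k)^{-3/2}$ factor requires Ford's full multi-scale machinery for the multiplication table problem, transported to the additive/Ewens setting, and your sketch does not indicate how the overcounting and anti-concentration steps are actually executed. For the purposes of \emph{this} paper, however, none of that is needed---the result is simply imported.
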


\begin{proposition}\label{AlternatingCase} Let $G$ be an almost simple group with alternating socle. Then there exists an absolute constant $C_1$
	such that $\mn_{n}(G) \leq C_1 n^{1/\delta}$ for all $n\geq 1$.
\end{proposition}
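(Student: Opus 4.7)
The plan is to apply the O'Nan--Scott classification of maximal subgroups of $G$ (which, away from finitely many small exceptions in $r$, is $A_r$ or $S_r$) and bound the contribution to $\mn_n(G)$ from each of three families: \emph{intransitive} (set stabilizers), \emph{transitive imprimitive} (block stabilizers), and \emph{primitive}.

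For the intransitive family, where Theorem~\ref{EFGTheorem} does the real work, I would proceed as follows. Up to $G$-conjugacy, these subgroups are parametrised by integers $k$ with $1 \le k < r/2$ via $M_k := G \cap (S_k \times S_{r-k})$, and $\w{M_k}$ is precisely the set of elements of $G$ fixing some $k$-subset of $\{1, \ldots, r\}$. Hence $|\w{M_k}|/|G| = i(r, k)$, and Theorem~\ref{EFGTheorem} yields $i(r,k) \le D_1 k^{-\delta}(1+\log k)^{-3/2} \le D_1 k^{-\delta}$. The defining condition $\lfloor |G|/|\w{M_k}| \rfloor = n$ requires $i(r,k) > 1/(n+1)$, which gives $k^{\delta} < D_1(n+1)$, hence $k < (D_1(n+1))^{1/\delta}$. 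Thus the intransitive family contributes at most $O(n^{1/\delta})$ conjugacy classes to $\mn_n(G)$, which is already the target bound.

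For the transitive families, the approach I would take combines a polynomial-in-$r$ bound on the number of $G$-conjugacy classes of transitive maximal subgroups of $A_r$ or $S_r$ (via O'Nan--Scott and Liebeck--Pyber--Shalev-type estimates) with a power-saving bound of the form $|\w M|/|G| \le c_2 r^{-\delta_0}$ for some absolute $\delta_0 > 0$; such bounds can be extracted from quantitative versions of the Luczak--Pyber theorem (for block stabilizers) and from the Fulman--Guralnick derangement estimates together with the classification of primitive maximal subgroups. Since $|\w M|/|G| \ge 1/(n+1)$ is needed for $M$ to contribute to $\mn_n(G)$, this forces $r = O(n^{1/\delta_0})$, and the polynomial class count then produces a polynomial-in-$n$ bound, which (after possibly enlarging $C_1$) is absorbed into the $O(n^{1/\delta})$ coming from the intransitive case.

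The hard part will be obtaining the power-saving bound on $|\w M|/|G|$ for non-set-stabilizer maximal subgroups, since Theorem~\ref{EFGTheorem} applies only to set stabilizers. For block stabilizers, this requires an EFG-style count of permutations preserving an equal-block partition, which is strictly stronger than fixing an $a$-subset; for primitive $M$, one may combine the Pyber--Mar\'oti bound on $|M|$ (polynomial in $r$) with Fulman--Guralnick-style derangement estimates to control $|\w M|/|G|$. These estimates are where the real technical effort in the proof will lie.
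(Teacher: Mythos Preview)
Your approach is correct and follows the same three-way split as the paper's proof; the intransitive case is handled identically via Theorem~\ref{EFGTheorem}. For the transitive case, however, you overestimate the difficulty: the Luczak--Pyber theorem you mention already shows that the proportion of elements of $S_r$ lying in \emph{any} proper transitive subgroup is $O(r^{-\alpha})$ for an absolute $\alpha>0$, handling imprimitive and primitive maximal subgroups simultaneously---no separate EFG-style block-system count or Fulman--Guralnick input is required. Combined with the elementary observation that there are at most $d(r)=r^{o(1)}$ conjugacy classes of imprimitive maximal subgroups and the Liebeck--Martin--Shalev bound of $r^{o(1)}$ conjugacy classes of primitive ones, the transitive contribution is $n^{o(1)}$ immediately. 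So what you flag as ``the hard part'' is in fact a single citation, and the real work in the proposition sits entirely in the intransitive case where Theorem~\ref{EFGTheorem} is applied.
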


\begin{proof} Write $\soc{(G)}=A_{r}$.
	Clearly we may assume that $r$ is as
	large as we wish. We will also assume that $G=S_{r}$, the case $G=A_{r}$ being similar.

	Fix $n\ge 1$, and let $\Cl_{intrans}(G)$ [resp. $\Cl_{imprim}(G)$, $\Cl_{prim}(G)$] be a set of representatives for the $G$-conjugacy classes of maximal subgroups $M$ of $G$ which are intransitive [resp. imprimitive, primitive], and satisfy $\lfloor |G|/|\widetilde{M}|\rfloor=n$. 
	Moreover, define $\Cl_{trans}(G) \coloneqq \Cl_{imprim}(G)\cup\Cl_{prim}(G)$. We aim to show that, for all $n \ge 1$:
	\begin{enumerate}
		\item $|\Cl_{intrans}(G)| \le c_{intrans} n^{1/ \delta}$, for some absolute constant $c_{intrans}$;
		\item $|\Cl_{trans}(G)|=n^{o(1)}$.
	\end{enumerate}
	Since $\mn_{n}(G)=|\Cl_{intrans}(G)|+|\Cl_{trans}(G)|$, the result will follow.
	
	We first deal with the intransitive case. 
 Since the intransitive maximal subgroups of $G$ are of the form $S_k \times S_l$, with $k+l=r$ and $k \neq l$, we may write $C_{intrans}(G)=\left\{M_{k_{1}},M_{k_{2}},\dots,M_{k_{s}}\right\}$, where the $k_{i}$'s are distinct integers, $1\le k_{i} < r/2$ and each $M_{k_{i}} \cong S_{k_i} \times S_{r-k_i}$ fixes a $k_{i}$-set (set-wise). Assume also that $k_{1}< k_{2}<\hdots< k_{s}$. In this way, $|\Cl_{intrans}(G)|\le k_{s}$.
	Note that $\pi \in G$ fixes a $k_i$-set if and only if $\pi$ belongs to some subgroup of the form $S_{k_i} \times S_{r-k_i}$, that is $\pi \in \widetilde M_{k_i} $. Therefore, the proportion $i(r,k_i)$ of elements of $G$ fixing a $k_i$-set is equal to $|\widetilde{M}_{k_i}|/|G|$.
	By Theorem \ref{EFGTheorem}, there exists an absolute constant $D_{1}'$ such that
	$$D_{1}'k_{s}^{\delta}(1+\log{k_{s}})^{3/2}\le |G|/|\widetilde{M}_{k_{s}}|. $$ 
	It follows that $D_1'k_{s}^{\delta} \leq  2\lfloor |G|/|\widetilde{M}_{k_{s}}| \rfloor =2n$, and hence
	$$|\Cl_{intrans}(G)|\le k_{s} \le \left( \frac{2}{D_{1}'} \right)^{1 / \delta}n^{1 /  \delta}.$$
	Therefore, we may set $c_{intrans} \coloneqq \left(2/{D_{1}'} \right)^{1 / \delta}$.
	Now, we consider $\Cl_{trans}(G)$, which again we may assume non-empty. The imprimitive maximal subgroups of $G$ are of the form $S_k \wr S_{l}$, where $ kl=r, k > 1$ and $l >1$.
	Therefore, $|\Cl_{imprim}(G)| \leq d(r)$, where $d(r)$ denotes the set of positive integer divisors of $r$. Also, Liebeck, Martin and Shalev \cite{LieMartShal} have proved that
	$S_r$ has $r^{o(1)}$ conjugacy classes of primitive subgroups, thus $|\Cl_{prim}(G)| \leq r^{o(1)}$. Finally, by \cite[Section 5]{LuPyb}, $n\ge r^{\alpha}$ for some $\alpha>0$. Since it is well known that $d(r)=r^{o(1)}$, we have $|\Cl_{trans}(G)|=n^{o(1)}$, and in particular there exists an absolute constant $c_{trans}$ such that  $|\Cl_{trans}(G)| \le c_{trans}n^{1/\delta}$ for all $n\geq 1$. The result follows by taking $C_1 \coloneqq c_{intrans}+c_{trans}$.
\end{proof}

\subsection{Classical simple groups}

In dealing with the case when $G$ is a classical simple group, we shall distinguish the maximal subgroups of $G$ using Aschbacher's Theorem \cite{Asch84}; we will use the formulation from \cite{KL}, and we will adopt the notation used there, although for ease of exposition, we will write $\Cl_{9}$ instead of the more commonly used $\mathcal{S}$.
We will need the following preliminary lemma. The proof relies on a series of papers by Fulman and Guralnick in which they prove that the proportion of derangements is bounded away from zero for any simple transitive group, confirming a conjecture of Boston and Shalev. Fulman and Guralnick's main result is stated and used later in this paper (see Theorem \ref{Bost}). Moreover, we will make use of the theory of Shintani descent (see \cite[Chapter 3]{Harp}).

\begin{lemma}\label{PreClassicalCase} Let $G=X_{r}(q)$ be a finite simple classical group of (untwisted) Lie rank $r$, defined over a field $\FF$ of order $q$, and let $m$ be the dimension of the natural module $V$ for $G$. Let $M$ be a maximal subgroup of $G$. Then there exist absolute constants $C_{1}$, $C_{2}$, $C_{4}$, $B'$, $\alpha$ and $\beta$, and a function $f(r,q)$ which tends to $0$ if either $r$ or $q$ is increasing, such that the following holds.
	\begin{enumerate}
		\item[\upshape(i)] If $M$ stabilises a $k$-dimensional subspace of $V$, with $1\le k\le m/2$, then $\lfloor |G|/|\w{M}|\rfloor\ge C_{1}k^{0.005}$.
		\item[\upshape(ii)] If $M$ lies in the Aschbacher class $\Cl_{2}$, then $\lfloor |G|/|\w{M}|\rfloor\ge C_{2}r^{\alpha}$.
		\item[\upshape(iii)] If $M$ stabilises an extension field of $\FF$ of degree $b$, for an odd prime $b$, then $|\w{M}|/|G|\le 1/b+f(r,q)$, where $f(r,q)\coloneqq B'(1 + \log_{q}{r})/q^{r/2-1}$.
		\item[\upshape(iv)] If $M$ lies in one of the Aschbacher classes $\Cl_{i}$ for $4\le i\le 9$, then $\lfloor |G|/|\w{M}|\rfloor\ge C_{4}q^{r/3}$.
	\end{enumerate}
\end{lemma}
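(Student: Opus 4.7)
The unifying identity is $|\widetilde{M}|/|G|=1-\delta(G,G/M)$, where $\delta(G,G/M)$ denotes the proportion of derangements in the natural transitive action of $G$ on $G/M$. Each of (i)--(iv) is thus a derangement estimate tailored to one of the Aschbacher classes, which I would establish by combining the Kleidman--Liebeck description of maximal subgroups with the Fulman--Guralnick machinery alluded to just before the statement.

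For parts (i) and (ii), I would appeal to the quantitative refinements of the Boston--Shalev conjecture proved by Fulman and Guralnick for subspace and imprimitive actions of classical groups. When $M\in\Cl_{1}$ stabilises a $k$-space of $V$, the proportion of elements of $G$ admitting an invariant $k$-space is at most $O(k^{-\eta})$ for some explicit $\eta>0.005$, which immediately yields the bound $C_{1}k^{0.005}$ in (i). For $M\in\Cl_{2}$, the index $[G:M]$ itself grows polynomially in the Lie rank $r$, and the corresponding derangement bound is stronger: one obtains $\delta(G,G/M)\geq 1-O(r^{-\alpha})$, hence (ii).

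Part (iii) is the analytic heart of the lemma, because the sharp main term $1/b$ has to be captured exactly rather than absorbed into a polynomial lower bound. I would prove it by Shintani descent, as hinted by the authors. A $\Cl_{3}$-subgroup stabilising a degree-$b$ field extension is, up to a small correction, the normaliser in $G$ of a subgroup of type $X_{r/b}(\FF_{q^{b}})$, and Shintani descent produces a canonical bijection between regular semisimple $G$-conjugacy classes meeting $\widetilde{M}$ and certain $\sigma$-conjugacy classes in $X_{r/b}(\FF_{q^{b}})\rtimes\langle\sigma\rangle$, where $\sigma$ is the $q$-power Frobenius. Direct enumeration of these classes yields the main term $1/b$; the remaining non-regular-semisimple elements contribute at most the error $f(r,q)=B'(1+\log_{q}r)/q^{r/2-1}$, using the standard upper bound on the proportion of non-regular-semisimple elements in a finite group of Lie type (see \cite{Harp}).

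For (iv) I would combine Liebeck's theorem, which gives $|M|\le q^{3m}$ for every maximal subgroup in $\Cl_{4}$--$\Cl_{9}$ (with a known short list of small-rank exceptions to be handled by hand), with the class-sum identity $|\widetilde{M}|/|G|=\sum_{C}|C|/|G|$ over $G$-conjugacy classes $C$ meeting $M$. Since the elements of $M$ have relatively small $M$-centralisers but comparatively large $G$-centralisers in a classical group, the right-hand side is dominated by $q^{-r/3}$ after a short case analysis through the Aschbacher classes, giving the bound. I expect the principal obstacle to lie in (iii): Shintani descent has to be set up uniformly across the four classical types, both untwisted and twisted, and across all relevant isogeny classes, and extracting an explicit error term matching $f(r,q)=B'(1+\log_{q}r)/q^{r/2-1}$ will require careful bookkeeping. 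The $\Cl_{9}$ contribution to (iv) is a secondary difficulty, since it has no uniform description and one must rely entirely on Liebeck's bound plus a few explicit low-rank checks.
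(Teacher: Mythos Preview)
Your treatment of parts (i), (ii) and (iv) lines up with the paper: all three are direct citations of Fulman--Guralnick (\cite{FGC2C3} for (i) and (ii), \cite{FGC4C9} for (iv)), and your sketches point at exactly those results. For (iv) the paper does not run a class-sum argument itself but simply invokes \cite[Lemmas 7.8--7.12]{FGC4C9}, whose proofs are in the spirit of what you describe.

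The substantive divergence is in part (iii), and here your decomposition is not the one that delivers the bound. You propose splitting $\widetilde{M}$ into regular semisimple versus non-regular-semisimple elements, extracting $1/b$ from a Shintani enumeration of the former and $f(r,q)$ from the latter. The paper's split is different and simpler: write $M=H=H_{0}.b$ with $H_{0}$ the extension-field group $Y_{m/b}(q^{b})$, and separate $H_{0}$ from $H\setminus H_{0}$. The main term $1/b$ then falls out of an elementary orbit--stabiliser count: since $[N_{G}(H_{0}):H_{0}]=b$, the union of the $G$-conjugates of $H_{0}$ covers at most $|G|/b$ elements. No Shintani descent is involved in the main term. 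Shintani descent is used only for the error: it identifies the $H$-classes in each nontrivial coset $H_{0}\varphi^{i}$ with conjugacy classes of $Y_{m/b}(q^{i})$, so the number of $G$-classes meeting $H\setminus H_{0}$ is at most $\sum_{i} k(Y_{m/b}(q^{i}))\le 2cq^{r/2}$ by \cite[Corollary 1.2]{FGC4C9}. Multiplying by the maximal class size, via the centraliser lower bound $|C_{G}(y)|\ge q^{r}/A(1+\log_{q}r)$ of \cite[Theorem 2.1]{FGC2C3}, gives $f(r,q)$.

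Your proposed split does not obviously yield $1/b$: a regular semisimple element of $G$ can lie in a conjugate of $H\setminus H_{0}$ just as well as in a conjugate of $H_{0}$, so there is no reason the regular semisimple contribution to $|\widetilde{M}|/|G|$ should equal $1/b$. Conversely, the generic non-regular-semisimple bound is of order $1/q$, which for large $r$ is much weaker than $f(r,q)=B'(1+\log_{q}r)/q^{r/2-1}$ and misses the $q^{r/2}$ saving. The idea you are missing is that the clean $1/b$ is a normaliser-index fact, not an analytic one; once you see that, the Shintani bookkeeping you correctly anticipate becomes the error analysis, not the main term.
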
 

\begin{proof}
	Part (i) follows from \cite[Theorems 2.2, 2.3, 2.4 and 2.5]{FGC2C3}.
More explicitly, from \cite[Theorem 2.2]{FGC2C3} we obtain that if $1 \le k \le m/2$, then the proportion of elements of $\SL_m(q)$ which fix a $k$-space is at most $A/k^{0.005}$ for a universal constant $A$ and, if $H \le \SL_m(q)$ is a maximal subgroup fixing a $k$-dimensional subspace, such probability is exactly equal to $|\widetilde H|/|\SL_m(q)|$. Completely analogous results for the groups $\SU_m(q^{1/2}), \Sp_{m}(q)$ and $\Omega^{\epsilon}_{m}(q)$ follow from \cite{FGC2C3}, Theorems 2.3, 2.4 and 2.5,
	respectively. 
	\paragraph{}
	Part (ii) follows from \cite[Theorem 1.4]{FGC2C3}, which asserts that if $M$ is a $\mathcal{C}_2$-subgroup, then $|\w{M}|/|G| \le A/r^{\delta}$, for some absolute constants $A$ and $\delta$.
	\paragraph{}
	Part (iv) follows from \cite[Lemmas 7.8, 7.9, 7.10, 7.11 and 7.12]{FGC4C9}. More explicitly, Lemma 7.8 asserts that
	if $\mathcal{X}(G)$ denotes the set of maximal subgroups of $G$ contained in $\Cl_i$, for $4\le i\le 8$. 
	Then $$  \frac{|\bigcup_{M \in \mathcal{X}(G)}M|}{|G|} < O(q^{-r/3} ).$$ A completely analogous result for the maximal subgroups of $G$ contained in $\Cl_9$ follows from Lemmas 7.9, 7.10, 7.11 and 7.12.
	\paragraph{}
	Finally, Part (iii) follows by arguing as in the proof of Corollary 5.3 of \cite{FGC2C3}; we repeat the details here for the readers benefit.  Set
	\begin{equation*}
		(H,H_0):=
		\begin{cases}
			(\GL_{m/b}(q^{b}).b,\GL_{m/b}(q^{b})), & \text{if}\ X=\textbf{L} \\
			(\GU_{m/b}(q^{b/2}).b,\GU_{m/b}(q^{b/2})), & \text{if}\ X=\textbf{U}
			\\
			(\Sp_{m/b}(q^{b}).b,\Sp_{m/b}(q^{b})), & \text{if}\ X=\textbf{S}
			\\
			(\SO^{\epsilon}_{m/b}(q^{b}).b,\SO^{\epsilon}_{m/b}(q^{b})), & \text{if}\ X=\textbf{O}^{\epsilon}, \text{with}\ \epsilon\in\left\{+,-,\circ\right\}.
		\end{cases}
	\end{equation*}
	Recall  that $m=r+1$ if $X=$ \textbf{L} or $X=$ \textbf{U}, $m=2r$ if $X=$ \textbf{S} or $X=$ \textbf{O}$^{\pm}$, and $m=2r+1$ if $X=$ \textbf{O}$^{\circ}$. 
	Let $Y_{m}(q) \in \{\GL_{m}(q), \GU_{m}(q^{1/2}), \Sp_{m}(q), \SO^{\epsilon}_{m}(q)\}$ and let $Q_{m}(q) \in \{\SL_{m}(q), \SU_{m}(q^{1/2}),  \Sp_{m}(q), \SO^{\epsilon}_{m}(q)\}$. Also, let $\varphi$ be the generator of the cyclic group of order $b$ in the definition of $H$, and recall that $\varphi$ induces a generalised $q$-Frobenius map on $H_{0}$. By Shintani descent, there is a bijection between $H$-conjugacy classes in the coset $H_{0}{\varphi}^i$ and conjugacy classes in $Y_{m/b}(q^{i})$, for each $0<i<b$, with $i \mid b$. 
	Now, by \cite[Corollary 1.2]{FGC4C9}, there is a constant $c$ (independent of $q$ and $r$) such that $$k(Y_{m/b}(q^i))\le cq^{ri/{b}}.$$ So there are at most $\sum_{i=1}^{\lfloor b/2 \rfloor }cq^{ri/b}$ $H$-conjugacy classes in $H \setminus H_0$. This number  is easily seen to be at most $2cq^{r/2}$. Therefore, there are at most $2cq^{r/2}$ $Y_{m}(q)$-conjugacy classes of $H$ that intersect $H \setminus H_0$, and these are exactly the conjugacy classes of $Y_{m}(q)$ that intersect $H \setminus H_0$.
	By \cite[Theorem 2.1]{FGC2C3}, there is an absolute constant $A$ such that, for all $y \in Y_{m}(q)$:
	\begin{equation}
		\label{ge}
		|C_{Y_m(q)}(y)| \ge \frac{q^r}{A(1+\log_q{r)}}.
	\end{equation}
	This implies that the proportion of elements of $Y_{m}(q)$  which intersect some conjugate of $H \setminus H_0$ is bounded above by $B(1 + \log_{q}{r})/q^{r/2}$ for some universal constant $B$: indeed if $y_1^{Y_m(q)}, \dots, y_t^{Y_m(q)}$ are the distinct conjugacy classes of $Y_m(q)$ which intersect $H \setminus H_0$, then we have just proved that $t \le 2cq^{r/2}$, and using (\ref{ge}) yields:
	\begin{align*}
		& \frac{\bigcup_{i=1}^t|y_i^{Y_{m}(q)}|}{|Y_m(q)|}=
		\frac{\bigcup_{i=1}^t[Y_m(q):C_{Y_m(q)}(y_i)]}{|Y_m(q)|}=\bigcup_{i=1}^t{\frac{1}{|C_{Y_m(q)}(y_i)|}} \\
		& \le t \frac{A(1+\log_q{r})}{q^r} \le 2cq^{r/2} \frac{A(1+\log_q{r})}{q^r}=\frac{B(1+\log_{q}{r})}{q^{r/2}},
	\end{align*}
	where we defined $B \coloneqq 2cA$.
	Thus, the proportion of elements of $Q=Q_{m}(q)$ contained in a conjugate of $H \setminus H_0$ is at most $B'(1 + \log_{q}{r})/q^{r/2-1} \coloneqq f(r,q)$, for an absolute constant $B'$.
	Finally, by the orbit-stabiliser theorem, $|\{H_0^y \mid y \in Q\}|=[Q:N_Q(H_0)]$, and since $[N_Q(H_0):H_0]=b$, the union of the $Q$-conjugates of $H_0$ contains at most $[Q:N_Q(H_0)]|H_0|=|Q|/b$ elements and therefore the proportion of elements of $Q$ contained in a conjugate of $H$ is 
	$$ \frac{|\w{H \cap Q}|}{|Q|} \le \frac{1}{b}+\frac{B'(1+\log_{q}r)}{q^{r/2-1}}.$$
\end{proof}
We also require the following lemma.
\begin{lemma}
	\label{GKS}
	Let $G$ be a classical simple group of (untwisted) Lie rank $r$, defined over a field $\FF$ of order $q$ and let $m$ be the dimension of the natural module $V$ for $G$. Let $\Delta$ be the normalizer of G in the corresponding projective linear group, and let $\rho_i(G)$ be the number of $\Delta$-conjugacy classes of maximal subgroups of $G$ in Aschbacher class $\Cl_i$. Then
	\begin{enumerate}
		\item[\upshape(i)] $\rho_1(G) \le (3/2)m$;
		\item[\upshape(ii)] $\rho_2(G) \le 2d(m)+1$, where $d(m)$ is the number of divisors of $m$;
		\item[\upshape(iii)] $\rho_3(G) \le \pi(m)+2$, where $\pi(m)$ is the number of prime divisors of m;
		\item[\upshape(iv)] $\rho_4(G) \le 2d(m)$;
		\item[\upshape(v)] $\rho_5(G) \le \log \log(q)$;
		\item[\upshape(vi)] $\rho_6(G) \le1 $;
		\item[\upshape(vii)] $\rho_7(G) \le 3\log{m}$;
		\item[\upshape(viii)] $\rho_8(G) \le 4$;
		\item[\upshape(ix)] $\rho_9(G) \le B r^6$, for some absolute constant $B$.
		
	\end{enumerate}
	Furthermore, each such $\Delta$-class splits into at most $m$ $G$-classes. 
	\begin{proof}
		The statements (i)--(viii) are \cite[Lemma 2.1]{GurKantSax}, and their proof follows from \cite[Chapter 4]{KL}; while Part (ix) follows from the proof of \cite[Theorem 6.3]{GurLarTie}. The final sentence follows from inspection of \cite[Tables 3.5.A--Tables 3.5.G]{KL}.
	\end{proof}
\end{lemma}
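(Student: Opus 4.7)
The plan is to read each of the nine bounds straight off Chapter 4 of Kleidman--Liebeck \cite{KL}, where the $\Delta$-classes of maximal subgroups in each Aschbacher class $\Cl_i$ are parametrised by a small amount of combinatorial data. For $i\in\{1,\dots,8\}$ the bookkeeping has already been carried out in Lemma 2.1 of \cite{GurKantSax}, so the natural approach is simply to cite that result; for $i=9$ there is no such clean count in \cite{KL}, and instead I would use the finer bound extracted from the proof of Theorem 6.3 of \cite{GurLarTie}. The last sentence of the lemma will follow from looking up $|\Delta:G|$ in Tables 3.5.A--3.5.G of \cite{KL}.

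More explicitly, for (i)--(viii) I would run through the labels attached to a $\Delta$-class in each geometric class: in $\Cl_1$ a class is determined by the dimension $k\in\{1,\dots,m-1\}$ of the stabilised subspace together with its isometry type (at most a small bounded number of types per $k$), giving $\le (3/2)m$; in $\Cl_2$ the class corresponds to a direct sum decomposition of $V$ into isomorphic summands of dimension $m/t$ for some divisor $t$ of $m$, with at most two classes per divisor plus one exceptional class, giving $2d(m)+1$; in $\Cl_3$ the parameter is a prime divisor of $m$, together with a bounded correction coming from the unitary/orthogonal flavours; in $\Cl_4$ the parameter is a tensor factorisation indexed by a divisor of $m$; in $\Cl_5$ it is a proper subfield of $\FF_q$, whose number is bounded by the number of prime divisors of the field exponent and hence by $\log\log q$; in $\Cl_6$ it is the (unique, when it exists) normaliser of a symplectic-type extraspecial group; in $\Cl_7$ it is a pair $(k,t)$ with $k^t=m$; and in $\Cl_8$ it is one of at most four classical subgroups preserving a non-degenerate form on $V$. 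In each case the stated bound is exactly the count produced by inspecting the relevant subsection of \cite[Chapter 4]{KL}.

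For (ix), the $\Cl_9$-classes correspond to $\Delta$-orbits of almost simple absolutely irreducible subgroups of $G$; the proof of Theorem 6.3 in \cite{GurLarTie} bounds the number of such orbits uniformly by $B r^{6}$ for an absolute constant $B$, and I would invoke this verbatim. Finally, any one $\Delta$-class of maximal subgroups of $G$ splits into at most $|\Delta:G|$ $G$-classes, and inspection of Tables 3.5.A--3.5.G of \cite{KL}, where the outer/diagonal quotients are tabulated in every classical type, shows $|\Delta:G|\le m$ uniformly, proving the last sentence.

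The one real obstacle is (ix): the bound in \cite{GurLarTie} is formulated in terms of isomorphism classes of almost simple subgroups of a Lie-type ambient group rather than $\Delta$-classes of maximal subgroups of $G$, so I would need to verify that the degree-$6$ polynomial control truly holds uniformly in $q$ and for all $r$ (not merely asymptotically); parts (i)--(viii) are routine bookkeeping by comparison.
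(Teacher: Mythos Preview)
Your proposal is correct and follows essentially the same route as the paper: items (i)--(viii) are quoted from \cite[Lemma 2.1]{GurKantSax} (with the underlying bookkeeping in \cite[Chapter 4]{KL}), item (ix) is extracted from the proof of \cite[Theorem 6.3]{GurLarTie}, and the final sentence is obtained by inspecting \cite[Tables 3.5.A--3.5.G]{KL}. Your added justification that a $\Delta$-class splits into at most $|\Delta:G|\le m$ $G$-classes is exactly the content one reads off those tables, and your caveat about (ix) is a fair point to keep in mind but does not alter the approach.
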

\begin{proposition}

	\label{ClassicalCase} Let $G$ be a finite simple classical group. Then there exists an absolute constant $C$ such that $\mn_{n}(G)\le Cn^{200}$ for all $n\ge 1$.
	\begin{proof} Fix a positive integer $n$, and for each $1\le i\le 9$, let $\mathcal{C}_{i}(G)$ be a set of representatives for the conjugacy classes of maximal subgroups $M$ of $G$ which lie in the Aschbacher class $\mathcal{C}_{i}$, and satisfy $\lfloor |G|/|\widetilde{M}|\rfloor=n$. Write $G=X_{r}(q)$, where $r$ is the Lie rank of $G$, $q$ is the order of the field $\mathbb F$ over which $G$ is defined, and let $m$ be the dimension of the natural module. Similar to the proof of Proposition \ref{AlternatingCase}, our strategy will be to prove that for each $1\le i\le 3$, there exists an absolute constant $c_{i}$ such that $|\Cl_{i}(G)|\le c_{i}n^{200}$; furthermore, we will show that there exists an absolute constant $c_{4}$ such that $|\bigcup_{i=4}^{9}\Cl_{i}(G)|\le c_{4}n^{200}$. The result will then follow by taking $C:=c_{1}+c_{2}+c_{3}+c_{4}$. 
		\paragraph{}
		If $\mathcal{C}_{1}(G)$ is non-empty, write $\mathcal{C}_{1}(G)=\left\{M_1,M_2,\hdots,M_s\right\}$, where for each $i$, $M_{i}$ fixes a $k_{i}$-dimensional subspace of the natural module for $G$ for some $1\le k_{i}\le m/2$. We may also assume that $k_{1} \le k_{2} \le \hdots \le k_{s}$. By \cite[Tables 3.5.A--3.5.G]{KL}, for a fixed $k_i$, there at most 3 distinct conjugacy classes of maximal subgroups which fix a $k_i$-set.
		So, $|\mathcal{C}_{1}(G)|\le 3k_{s}\le 3(n/C_{1})^{200}$, where the last inequality follows from Lemma \ref{PreClassicalCase} Part (i), and therefore we may take $c_{1}:=3(1/C_{1})^{200}$.
		\paragraph{}
		If $\mathcal{C}_{2}(G)$ is non-empty, then Lemma \ref{PreClassicalCase} Part (ii) implies that there exist absolute constants $C_{2}$ and $\alpha$ such that $n\ge C_{2}r^{\alpha}$. Moreover, by Lemma \ref{GKS}, $|\mathcal{C}_{2}(G)|\le 2d(m)+1$, where $d(m)$ is the number of divisors of $m$. As mentioned in the proof of Proposition \ref{AlternatingCase}, it is well known that $d(m)=m^{o(1)}$. Recalling also the relation between the Lie rank $r$ and the dimension $m$ of the natural module, it follows that $|\mathcal{C}_{2}(G)|=n^{o(1)}$, and hence that the constant $c_{2}$ exists. 
		\paragraph{}
		Now, we consider $\mathcal{C}_{3}(G)$, which again we may assume  non-empty. Write $\mathcal{C}_{3}(G)=\left\{M_1,M_2,\hdots,M_t\right\}$, where $M_i$ stabilizes an extension field of $\FF$ of prime degree $b_{i}\mid m$. Assume also that $b_{1}\le b_{2}\le \hdots\le b_{t}$. Now, it follows from \cite[Tables 3.5.A--3.5.G]{KL} that a prime divisor $b$ of $n$ occurs at most $4$ times among the $b_i$. Thus, at least $\lceil t/4\rceil$ of the $b_{i}$ are distinct. Hence, $|\Cl_{3}(G)|=t\le 4b_{t}$. 
		If $b_t=2$, then $|\Cl_{3}(G)| \le 4$ and the result is clear, 
		so we may assume that $b_{t}$ is odd. 
		Let $C_{3}$ be an absolute constant such that if $\max\left\{q,r\right\}>C_{3}$, then $B'(1 + \log_{q}{r})/q^{r/2-1} \le (2r+1)^{-1}$, where $B'$ is the constant appearing in Lemma \ref{PreClassicalCase}(iii). Then since $(2r+1)^{-1}\le 1/m\le 1/b_t$, Lemma \ref{PreClassicalCase}(iii) implies that if $\max\left\{q,r\right\}>C_{3}$, then $|\Cl_{3}(G)|=t\le 4b_{t}\le 8n$. If $\max\left\{q,r\right\}<C_{3}$, then $\Cl_{3}(G)=O(1)$. Either way, the existence of the constant $c_{3}$ follows.
		
		\paragraph{}
		Finally, by Lemma \ref{PreClassicalCase} Part (iv), there exists an absolute constant $C_{4}$ such that if $\mathcal{C}_{i}(G)$ is non-empty for some $4\le i\le 9$, then $n\ge C_{4}q^{r/3}$. One can now easily deduce, from the upper bounds (iv)--(ix), together with the final statement, in Lemma \ref{GKS}, that $|\bigcup_{i=4}^{9}\Cl_{i}(G)|\le c_4n$ for some absolute constant $c_4$. This completes the proof.
	\end{proof}
	
\end{proposition}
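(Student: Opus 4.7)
The natural strategy is to stratify by Aschbacher class and bound each piece separately, using Lemma \ref{PreClassicalCase} together with Lemma \ref{GKS}. More precisely, for each $1\le i\le 9$, let $\mathcal{C}_i(G)$ be the set of conjugacy classes of maximal subgroups $M$ of $G$ in Aschbacher class $\mathcal{C}_i$ satisfying $\lfloor |G|/|\widetilde{M}|\rfloor=n$. It suffices to bound $|\mathcal{C}_i(G)|$ by a polynomial in $n$ of degree at most $200$ for each $i$, then sum. The exponent $200$ is forced by class $\mathcal{C}_1$: Lemma \ref{PreClassicalCase}(i) gives $n\ge C_1 k^{0.005}$ for a subspace stabiliser fixing a $k$-space, so $k\le (n/C_1)^{200}$. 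Because inspection of \cite[Tables 3.5.A--3.5.G]{KL} shows that for each fixed $k$ there are only $O(1)$ classes of such subgroups, I get $|\mathcal{C}_1(G)|\le O(n^{200})$, and I expect this to be the dominant contribution.

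For $\mathcal{C}_2$, Lemma \ref{PreClassicalCase}(ii) gives $n\ge C_2 r^\alpha$, hence $r\le (n/C_2)^{1/\alpha}$. Lemma \ref{GKS}(ii) then gives $|\mathcal{C}_2(G)|\le 2d(m)+1=m^{o(1)}=n^{o(1)}$, which is safely dominated by $n^{200}$. The classes $\mathcal{C}_4$ through $\mathcal{C}_9$ are all simultaneously controlled by Lemma \ref{PreClassicalCase}(iv), which gives $n\ge C_4 q^{r/3}$; this bounds both $q$ and $r$ by polynomial functions of $n$, and then the polynomial (in fact low degree polynomial in $m$ or $r$) bounds on $\rho_i(G)$ in Lemma \ref{GKS}(iv)--(ix), together with the ``splits into at most $m$'' factor, yield $|\bigcup_{i=4}^{9}\mathcal{C}_i(G)|\le O(n^c)$ for some small absolute $c$. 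The main term here is $\rho_9\le Br^6$, but even this is fine once $r$ is polynomial in $n$.

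The delicate case is $\mathcal{C}_3$ (field extension subgroups), because Lemma \ref{PreClassicalCase}(iii) only gives $|\widetilde{M}|/|G|\le 1/b+f(r,q)$ rather than a direct lower bound on $n$ in terms of the prime extension degree $b$. To handle it, I would list $\mathcal{C}_3(G)=\{M_1,\dots,M_t\}$ with $M_i$ stabilising an extension field of prime degree $b_i\mid m$, and note from \cite[Tables 3.5.A--3.5.G]{KL} that each prime $b$ occurs boundedly often among the $b_i$, so $t=O(b_t)$ where $b_t$ is the largest such prime. I would dispose of $b_t=2$ separately. For $b_t$ odd, I would choose an absolute threshold $C_3$ so that $\max\{q,r\}>C_3$ forces the error term $f(r,q)$ to be $\le (2r+1)^{-1}\le 1/b_t$, whence $1/n\ge \tfrac{1}{2}\cdot (2/b_t)$, giving $b_t\le 8n$ and hence $t\le O(n)$; the complementary regime $\max\{q,r\}\le C_3$ admits only $O(1)$ groups altogether.

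The main obstacle I expect is precisely making the $\mathcal{C}_3$ argument go through cleanly: one has to convert a bound on the measure $|\widetilde M|/|G|$ (a sum of two competing quantities) into a bound on the integer part $\lfloor|G|/|\widetilde M|\rfloor$, and the naive manipulation only works once the error term $f(r,q)$ is genuinely smaller than $1/m$. The boundedness statements in Lemma \ref{GKS} are doing crucial work throughout, ensuring that once $q$ and $r$ are polynomially bounded in $n$, the \emph{number} of classes in each Aschbacher type is itself only polynomial; the polynomial exponent $200$ is really a $\mathcal{C}_1$ phenomenon inherited from the derangement exponent $0.005$ in \cite{FGC2C3}.
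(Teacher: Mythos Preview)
Your proposal is correct and follows essentially the same approach as the paper's proof: the same stratification by Aschbacher class, the same use of Lemma~\ref{PreClassicalCase} parts (i)--(iv) together with Lemma~\ref{GKS}, and the same threshold argument for the delicate $\mathcal{C}_3$ case. The only slip is the displayed inequality in the $\mathcal{C}_3$ step (you wrote $1/n \ge \tfrac{1}{2}\cdot (2/b_t)$, which points the wrong way; the intended statement is $|G|/|\widetilde{M}|\ge b_t/2$, hence $n\ge \lfloor b_t/2\rfloor$), but the conclusion $b_t\le 8n$ and the overall argument are exactly as in the paper.
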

\subsection{Exceptional simple groups of Lie type}
\begin{proposition}\label{ExceptionalCase} Let $G$ be a simple exceptional group of Lie type. For $n\ge 1$, we have $\mn_{n}(G)=n^{o(1)}$ and $\mn_{n}(G)=O(n)$.
	\begin{proof} Write $G=$ $^{\epsilon}X_{r}(q)$, where $r$ is the (untwisted) Lie rank of $G$, and $q$ is the order of the field of definition. By  \cite[Corollary 4]{LieSeitz}, there exists an absolute constant $D$ such that if a maximal subgroup $M$ of $G$ has order larger than $D$, then $M$ falls into at most $O(\log\log{q})$ conjugacy classes of subgroups of $G$. Furthermore, the number of conjugacy classes of maximal subgroups $M$ of $G$ satisfying $|M|\le D$ is $O(1)$, by \cite[Theorem 1.2]{LieMartShal}. Also, it is shown in \cite{FGsurvey} that there exists an absolute constant $C$ such that $|G|/|\widetilde{M}|\ge Cq$ if $M$ is not a maximal subgroup of $G$ which has maximal rank. Finally, the number of conjugacy classes of maximal subgroups of $G$ of maximal rank is $O(1)$, by \cite[Main Theorem and Table 5.1]{LieSaxSeitz}. The proposition follows.\end{proof}
\end{proposition}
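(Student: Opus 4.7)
My plan is to partition the $G$-conjugacy classes of maximal subgroups counted by $\mn_n(G)$ according to whether the representative has maximal rank or not, treating the two cases via distinct structural results. The Lie rank $r$ of $G = {}^{\epsilon}X_r(q)$ is bounded (at most $8$) since $G$ is exceptional, so the only genuinely varying parameter is the field size $q$; this is the reason why estimates of the form $O(\log\log q)$ will eventually turn into $n^{o(1)}$.

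First I would dispose of the maximal-rank classes: by the Liebeck--Saxl--Seitz classification (and the accompanying tables), the number of $G$-conjugacy classes of maximal subgroups of $G$ of maximal rank is bounded by an absolute constant, and so their contribution to $\mn_n(G)$ is $O(1)$. Second, for $M$ maximal in $G$ but \emph{not} of maximal rank, I would appeal to the Fulman--Guralnick derangement estimate, which gives an absolute constant $C>0$ such that $|G|/|\widetilde{M}|\geq Cq$. If such an $M$ contributes to $\mn_n(G)$, then
\[
n+1 > |G|/|\widetilde{M}| \ge Cq,
\]
so $q = O(n)$. Hence any class of non-maximal-rank type that is counted by $\mn_n(G)$ forces a linear bound on $q$ in terms of $n$.

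Third, I would bound the \emph{total} number of conjugacy classes of maximal subgroups of $G$ by splitting into those of small and large order: the Liebeck--Martin--Shalev theorem gives $O(1)$ classes with $|M|$ bounded by an absolute constant $D$, while Liebeck--Seitz bound the number of classes with $|M|>D$ by $O(\log\log q)$. So the total number of classes of maximal subgroups of $G$ is $O(\log\log q)$. Combining with the previous step, whenever $\mn_n(G)$ receives any contribution from a non-maximal-rank class, we have $q = O(n)$, and therefore
\[
\mn_n(G) = O(1) + O(\log\log q) = O(\log\log n).
\]
If only maximal-rank classes contribute, the bound is simply $O(1)$. Either way, $\mn_n(G) = O(\log\log n) = n^{o(1)}$, and this trivially implies $\mn_n(G)=O(n)$.

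The only real obstacle I foresee is bookkeeping: one must check that the constants hidden in the cited Liebeck--Seitz, Liebeck--Martin--Shalev, Liebeck--Saxl--Seitz, and Fulman--Guralnick results are absolute (independent of the family of exceptional groups), so that the final estimate is uniform over all simple exceptional groups of Lie type. Beyond that, the proof is a short combination of the four cited structural theorems and does not seem to require any new computation.
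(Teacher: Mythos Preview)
Your proposal is correct and follows essentially the same approach as the paper: both arguments invoke exactly the same four structural results (Liebeck--Saxl--Seitz for the $O(1)$ maximal-rank classes, Fulman--Guralnick for the bound $|G|/|\widetilde{M}|\ge Cq$ in the non-maximal-rank case, and the Liebeck--Seitz / Liebeck--Martin--Shalev dichotomy for the total class count $O(\log\log q)$), and combine them in the same way. You have simply spelled out more explicitly the deduction that $q=O(n)$ converts $O(\log\log q)$ into $n^{o(1)}$, which the paper compresses into the single sentence ``The proposition follows.''
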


\subsection{Proof of Theorem \ref{maineinv}}
In the previous subsections, we have estimated $\mn_n(G)$, when $G$ is a finite non-abelian simple group. Now, with the help of this information, we want to deduce   a key estimate of $\mn_n(G)$, when $G$ is a direct product of simple groups.  
Before we deduce this result, we require some additional notation. Suppose that  $T$ is a nonabelian finite simple group, and fix $\alpha\in \aut(T)$. Define the subgroup $D_{\alpha}\le T\times T$ by $$D_{\alpha}:=\left\{(t,t^{\alpha})\text{ : }t\in T\right\}.$$ Then, it is easy to see that $D^{\alpha}$ and $D^{\beta}$ are conjugate in $G$ if and only if $T\alpha=T\beta$. Moreover, $D_{\alpha}$ is a maximal subgroup of $T\times T$. Finally, we note that, for a fixed $\alpha\in \aut(T)$, the size of the $(T\times T)$-conjugacy class of $(t,t^{\alpha})\in G$ is $[T:C_{T}(t)]^{2}$, i.e. it is the square of the size of the $T$-conjugacy class of $t$.

We are now ready to prove the afore mentioned key estimate. 
\begin{proposition}\label{MaxDirect} Let $G=T_{1}\times \hdots\times T_{k}$ be a direct product of $k \ge 1$ nonabelian finite simple groups. Then there exist absolute constants $c_0$ and $C'$ such that $\mn_{\mathrm{prod},n}(G)\le c_0kn^{200}$ and $\mn_{n}(G)\le C'k^{2}n^{200}$ for all $n\ge 1$, where $\mn_{\mathrm{prod},n}(G)$ is the number of conjugacy classes of maximal subgroups of $G$ of product type satisfying $\lfloor |G|/|\w{M}|\rfloor=n.$ 
	\end{proposition}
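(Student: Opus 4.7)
The approach is to apply Goursat's lemma (as reviewed in Section 2) to classify the maximal subgroups of $G$ into product-type and diagonal-type, and bound the contribution of each to $\mn_n(G)$ separately. For the product-type bound, each such maximal subgroup $M$ is obtained by replacing a single simple factor $T_j$ by one of its maximal subgroups $K$, with $|\w{M}|/|G|=|\w{K}|/|T_j|$. Combining Propositions \ref{AlternatingCase}, \ref{ClassicalCase}, and \ref{ExceptionalCase} with the trivial case of sporadic groups yields an absolute constant $c_0$ such that $\mn_n(T)\le c_0 n^{200}$ for every nonabelian finite simple $T$. Summing over the $k$ factors of $G$ gives $\mn_{\mathrm{prod},n}(G)\le c_0 k n^{200}$, which settles the first assertion of the proposition.

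For the diagonal type I would group factors by isomorphism type, writing $G\cong\prod_{i=1}^{r}S_i^{k_i}$ with the $S_i$ pairwise non-isomorphic. A diagonal maximal subgroup $M$ lies in a single block $S_i^{k_i}$ and stabilises a diagonal of the form $D_\alpha=\{(t,t^\alpha):t\in S_i\}$ between some pair of coordinates $j<l$, for some $\alpha\in\aut(S_i)$. Conjugation in $G$ changes $\alpha$ by an arbitrary inner automorphism of $S_i$, so the $G$-conjugacy classes of diagonal subgroups within the block are parametrised by the pair $\{j,l\}$ together with the coset $\alpha\cdot\inn(S_i)\in\out(S_i)$, yielding $\binom{k_i}{2}|\out(S_i)|$ such classes per block.

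The key observation is that $|\w{D_\alpha}|$ is independent of $\alpha$: the conjugates of $D_\alpha$ in $S_i\times S_i$ form exactly $\{D_\beta:\beta\in\inn(S_i)\alpha\}$, and a direct calculation then gives $|\w{D_\alpha}|=\sum_{C\in\mathrm{Cl}(S_i)}|C|^2$. Consequently $|G|/|\w{M}|$ depends only on the isomorphism type of $S_i$; writing $n_{S_i}:=\lfloor|S_i|^2/\sum_C|C|^2\rfloor$, we obtain
\begin{equation*}
\mn_{\mathrm{diag},n}(G)=\sum_{i\,:\,n_{S_i}=n}\binom{k_i}{2}|\out(S_i)|\le\frac{k^2}{2}\sum_{T\,:\,n_T=n}|\out(T)|,
\end{equation*}
where the inner sum is over nonabelian finite simple groups $T$ up to isomorphism. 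The factor of $k^2$ comes from the estimate $\binom{k_i}{2}\le k^2/2$.

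The main obstacle will be bounding $\Sigma(n):=\sum_{T\,:\,n_T=n}|\out(T)|$ by a polynomial in $n$ of degree at most $200$. I expect this to follow from a short family-by-family analysis, using the classification bound $|\out(T)|=O(\log|T|)$ together with lower bounds on $n_T$ derived from small centralisers: since $\sum_C|C|^2\le|T|\cdot\max_C|C|$, one has $n_T\ge\min_{x\ne 1}|C_T(x)|$. For $T=\alt_r$ the centraliser of an $r$-cycle gives $n_T\gtrsim r$; for classical or exceptional $T=X_r(q)$, regular semisimple elements give a lower bound polynomial in $q^r$; and sporadic groups contribute only $O(1)$. These estimates constrain the parameters of $T$ polynomially in $n$ while keeping $|\out(T)|$ polylogarithmic, so in fact $\Sigma(n)=O(n^C)$ for an absolute $C$ far smaller than $200$. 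Assembling the pieces, $\mn_n(G)=\mn_{\mathrm{prod},n}(G)+\mn_{\mathrm{diag},n}(G)\le c_0 k n^{200}+C_1 k^2 n^{200}\le C'k^2 n^{200}$, as required.
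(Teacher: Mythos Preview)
Your proposal is correct and follows essentially the same route as the paper: both treat product-type subgroups by summing the bounds from Propositions~\ref{AlternatingCase}, \ref{ClassicalCase} and \ref{ExceptionalCase} over the $k$ factors, and both treat diagonal-type subgroups by computing $|\w{D_\alpha}|=\sum_{u\in T}[T:C_T(u)]=\sum_{C}|C|^{2}$ and then combining a uniform lower bound on centraliser orders in $T$ (the paper cites $|C_T(t)|\ge (q-1)^r/r$ from \cite{FGsurvey} for Lie type, and uses $|\out(T)|\le 4$ for alternating and sporadic $T$) with $|\out(T)|=O(r\log q)$. Your intermediate quantity $\Sigma(n)=\sum_{T:n_T=n}|\out(T)|$ is a slightly more explicit bookkeeping of the possibility that several non-isomorphic $T$ share the same $n_T$, but the substance of the two arguments is identical.
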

	\begin{proof} As we have recalled in Section \ref{secondsec}, the maximal subgroups $M$ of $G$ fall into  two categories. We adopt here the following notation: \begin{enumerate}
			\item \emph{Product type}: $M$ is of the form $M=M_{i}\times \hat{T}_{i}$, where $\hat{T}_{i}:=\prod_{l\neq i} T_{l}$, $1\le i\le k$, and $M_{i}$ is a maximal subgroup of $T_{i}$.

			\item \emph{Diagonal type}\label{diagtype}: $M$ is of the form $M=D_{i,j,\alpha}\times \hat{T}_{i,j}$, where $\hat{T}_{i,j}:=\prod_{l\neq i,j} T_{l}$, $T_{i}\cong T_{j}$, $1\le i<j\le k$, $\alpha\in \aut(T_{i})$ and  $D_{\alpha}\cong D_{i,j,\alpha} \le T_{i}\times T_{j}$.\end{enumerate}
		Now, fix $n$, and let $\Cl_{prod}(G)$ [resp. $\Cl_{diag}(G)$] be a set of representatives for the conjugacy classes of maximal subgroups $M$ of $G$ of product [resp. diagonal] type satisfying $\lfloor |G|/|\w{M}|\rfloor=n$. Then it follows immediately from Propositions \ref{AlternatingCase}, \ref{ClassicalCase} and \ref{ExceptionalCase} that $|\Cl_{prod}(G)|\le C_{prod}kn^{200}$ for some absolute constant $C_{prod}$. So we just need to prove that there exists a constant $C_{diag}$ such that $|\Cl_{diag}(G)|\le C_{diag}k^{2}n^{200}$.
		
		Let $M\in \Cl_{diag}(G)$, and let $T:=T_{i}\cong T_{j}$, and $\alpha\in \aut{(T)}$ be as in the description of diagonal type subgroups at (\ref{diagtype}). 
		Then, using the discussion preceding this proposition, we have 
		\begin{equation}
			\label{diag}
			|\Cl_{diag}(G)|\le k(k-1)|\out{(T)}|.
		\end{equation}
		The result then follows when $T$ is an alternating group, since $|\out(T)| \leq 4$. 
		So, assume that $T$ is a finite simple group of Lie type, of (untwisted) Lie rank $r$, and field of definition of order $q$. 
		Moreover
		$$\widetilde D_\alpha=\{(t^x,t^{\alpha y})\mid t,x,y \in T\}=\{(u,u^{x^{-1}\alpha y})\mid u,x,y \in T\}=\{(u,u^{\alpha z})\mid u,z \in T\}.$$
		Thus
		\begin{align}\label{impnote} |\w{M}|=|\hat{T}_{i,j}||\widetilde D_\alpha|=|\hat{T}_{i,j}|\sum_{u\in T}[T:C_T(u^\alpha)]=|\hat{T}_{i,j}|\sum_{u\in T}[T:C_T(u)].\end{align}

		Since $|C_{T}(t)|\ge (q-1)^{r}/r$ by \cite[Lemma 3.4]{FGsurvey}, 
		$|\w{M}|\leq|\hat{T}_{i,j}| |T|^2/(q-1)^r=|G|r/(q-1)^r$ and $2n=2\lfloor|G|/|\w{M}| \rfloor \geq  |G|/|\w{M}| \geq (q-1)^r/r.$  Since $|\out{(T)}|=O(r\log{q})$ (see for example \cite{boundautG}), the result now follows from (\ref{diag}).   
	\end{proof}
Before proceeding to the proof of Theorem \ref{maineinv}, we note the result of Fulman and Guralnick mentioned in the introduction.

\begin{theorem}[{\cite[Theorem 1.1]{FGC2C3}}]
	\label{Bost}
	Let $G$ be a finite simple group acting faithfully and transitively on a set $X$ of cardinality $n$. With possibly finitely many exceptions, the proportion of derangements in $G$ is at least 0.016. \end{theorem}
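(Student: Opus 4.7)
The plan is to invoke the classification of finite simple groups and, for each infinite family, show that the proportion of non-derangements --- equivalently the ratio $|\w{M}|/|G|$, where $M$ is the point stabiliser and $\w{M}$ denotes the union of its $G$-conjugates --- is bounded away from $1$, with at most finitely many exceptions per family. A first reduction is to assume that $M$ is a maximal subgroup: if $M$ is properly contained in a maximal $H$, then $\w{M}\subseteq \w{H}$, so any bound $|\w{H}|/|G|\leq 1-0.016$ for maximal $H$ descends to $M$. The sporadic groups contribute only finitely many cases and may be absorbed into the permitted finite exceptions, so the task splits cleanly into the alternating, classical and exceptional families.

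For the alternating/symmetric case I would partition the maximal subgroups using the O'Nan--Scott theorem into intransitive $S_{k}\times S_{n-k}$, imprimitive $S_{k}\wr S_{n/k}$, and primitive pieces. For the intransitive stabilisers the non-derangement proportion is exactly the proportion of elements fixing a $k$-set, and Theorem \ref{EFGTheorem} provides the upper bound $D_{1}k^{-\delta}(1+\log k)^{-3/2}$, which drops below $0.984$ once $k$ exceeds an absolute constant; the finitely many small values of $k$ are handled by direct fixed-point estimates in $S_{n}$. For the imprimitive stabilisers, an inclusion--exclusion count over block systems combined with the rapid growth of $[S_{n}:S_{k}\wr S_{n/k}]$ suffices. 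The primitive case is the shortest: the Bovey/Cameron bound $|M|\leq n^{O(\log n)}$ for a primitive subgroup of degree $n$ gives $|\w{M}|/|G|\leq |M|^{2}/|G|\to 0$ as $n\to\infty$.

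For the classical family $G=X_{r}(q)$, Aschbacher's theorem partitions the maximal subgroups $M$ of $G$ into the nine classes $\Cl_{1},\ldots,\Cl_{9}$, and the essential inputs are precisely those assembled in Lemma \ref{PreClassicalCase}: for a $\Cl_{1}$-subgroup stabilising a $k$-subspace of the natural module, a character-theoretic generating-function argument of Fulman--Guralnick gives $|\w{M}|/|G|\leq Ak^{-0.005}$; for $\Cl_{2}$ one has $|\w{M}|/|G|\leq Ar^{-\alpha}$; for $\Cl_{3}$ one has $|\w{M}|/|G|\leq 1/b+f(r,q)$ with $b$ an odd prime and $f(r,q)\to 0$; and for the union $\Cl_{4}\cup\cdots\cup\Cl_{9}$ one has $|\w{M}|/|G|=O(q^{-r/3})$. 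Combined with the class-count bounds in Lemma \ref{GKS}, each of these forces the derangement proportion to exceed $0.016$ once $r$ or $q$ is sufficiently large, leaving only finitely many cases per class to verify directly. The exceptional groups of Lie type are handled through the Liebeck--Seitz classification of maximal subgroups together with the Liebeck--Saxl--Seitz classification of maximal-rank subgroups: off the bounded list of maximal-rank cases one has $|\w{M}|/|G|\to 0$ as $q\to\infty$ via the centraliser bound $|C_{G}(g)|\geq q^{r}/(A(1+\log_{q}r))$, while the maximal-rank cases reduce to explicit case analysis in a small list.

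The principal obstacle is the classical case, specifically the $\Cl_{1}$ subspace action, where the bound $|\w{M}|/|G|\leq Ak^{-0.005}$ decays very slowly in $k$ and must be made uniform in the rank $r$ and the field size $q$. Securing this uniformity requires the full Fulman--Guralnick machinery: cycle-index generating functions for the classical groups, Shintani descent to transfer estimates between twisted and untwisted classes, and Weil-type bounds on exponential sums over $\FF_{q}$. The remaining difficulty is isolating and verifying, by direct computation, the small-rank and small-field configurations across all three infinite families which are collected into the permitted finite list of exceptions in the statement.
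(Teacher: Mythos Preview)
The paper does not prove Theorem \ref{Bost}; it is quoted verbatim from \cite[Theorem 1.1]{FGC2C3} and used as a black box, so there is no proof in the paper to compare your proposal against.

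Your outline is a fair high-level summary of the Fulman--Guralnick programme, which is spread over the series \cite{FGsurvey,FGC4C9,FGC2C3}. Two points are worth flagging, however. First, there is a circularity: you invoke Lemma \ref{PreClassicalCase} for the classical $\Cl_{1}$--$\Cl_{9}$ bounds, but that lemma in this paper is \emph{proved} by citing \cite{FGC2C3} and \cite{FGC4C9} --- i.e.\ the very papers containing Theorem \ref{Bost}; so as a self-contained argument your proposal assumes what it sets out to prove. Second, for the intransitive alternating case you appeal to the Eberhard--Ford--Green bound (Theorem \ref{EFGTheorem}, 2016), which postdates the resolution of the Boston--Shalev conjecture; Fulman and Guralnick's own treatment of the symmetric/alternating case (going back to \cite{LuPyb} and earlier Fulman--Guralnick papers) uses cycle-index/generating-function methods rather than the EFG asymptotic. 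None of this is a mathematical error in your sketch, but it means your proposal is a retrospective reorganisation that leans on the theorem's own ingredients rather than an independent route.
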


Note that any faithful transitive action of a finite simple group $G$ is isomorphic to the action by right multiplication on the set of right cosets of a subgroup $M$ of $G$, and the stabiliser of $Mg$ is $M^g$. Therefore, $\w{M}$ is the set of elements of $G$ having at least one fixed point under this action. Thus, Theorem \ref{Bost} can be rephrased as follows.

\begin{theorem} \label{BostonShalev}
	There is an absolute constant $\alpha_0>0$ such that $|G|/|\w{M}|>1+\alpha_0$ whenever $G$ is a non-abelian finite simple group and $M$ is a subgroup of $G$.
\end{theorem}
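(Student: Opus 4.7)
The plan is to derive Theorem~\ref{BostonShalev} as essentially a translation of the stated Fulman--Guralnick theorem (Theorem~\ref{Bost}) into the language of unions of conjugates. The main step is the dictionary: for any subgroup $M\le G$, the action of $G$ on the right cosets $G/M$ is transitive, and an element $g\in G$ fixes some coset $Mx$ precisely when $g\in M^{x^{-1}}$. Hence the set of elements possessing at least one fixed point is exactly $\widetilde{M}=\bigcup_{x\in G} M^x$, and the set of derangements is $G\setminus \widetilde{M}$.

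First, I would reduce to the case where $M$ is a proper subgroup of $G$, since if $M=G$ then $\widetilde{M}=G$ and no constant $\alpha_0>0$ can satisfy $|G|/|\widetilde M|>1+\alpha_0$; this is the intended scope of the statement. With $M<G$, the coset action of $G$ on $G/M$ is faithful: its kernel is the normal core $\bigcap_{x\in G}M^x$, a proper normal subgroup of the nonabelian simple group $G$, hence trivial. Theorem~\ref{Bost} therefore applies and, outside finitely many exceptional pairs, gives
\[
\frac{|G\setminus \widetilde{M}|}{|G|}\;\ge\;0.016,\qquad\text{so that}\qquad \frac{|G|}{|\widetilde{M}|}\;\ge\;\frac{1}{1-0.016}\;>\;1+0.016.
\]
Thus $\alpha_0=0.016$ (or anything smaller than $1/0.984-1$) suffices for all but finitely many $(G,M)$.

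To absorb the finitely many exceptions, I would invoke Jordan's classical theorem that every nontrivial finite transitive permutation group contains at least one derangement: for any proper $M$ in any such exceptional simple group $G$, this yields $|\widetilde{M}|<|G|$ strictly, so $|G|/|\widetilde{M}|>1$. Since each such exceptional $G$ has only finitely many conjugacy classes of subgroups, the set of values $|G|/|\widetilde{M}|$ arising from the finitely many exceptional pairs is finite; taking the smallest and combining with $0.016/(1-0.016)$ produces the desired absolute constant $\alpha_0>0$.

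The genuine mathematical content lies entirely inside Theorem~\ref{Bost} itself, whose proof is the deep work of Fulman--Guralnick on the Boston--Shalev conjecture; by assumption it is available. So the only real ``obstacle'' in the present statement is bookkeeping: ensuring faithfulness of the coset action (clear from simplicity) and handling the finite list of exceptions (clear from Jordan's theorem plus the finiteness of the subgroup lattice of each fixed group). No case analysis on the type of simple group is required, because the reformulation is uniform.
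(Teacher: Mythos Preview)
Your proposal is correct and follows essentially the same route as the paper: the paper simply observes that the stabiliser of $Mg$ in the coset action is $M^g$, so $\widetilde{M}$ is exactly the set of non-derangements, and then declares Theorem~\ref{BostonShalev} to be a rephrasing of Theorem~\ref{Bost}. You are in fact more careful than the paper, since you explicitly handle faithfulness via simplicity and absorb the ``finitely many exceptions'' clause using Jordan's theorem, points the paper leaves implicit.
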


We also require the following straightforward lemma.
\begin{lemma}\label{2lemma}
Let $G=T_1\times\hdots\times T_k$ be a direct product of nonabelian finite simple groups, let $M$ be a maximal subgroup of diagonal type, and let $\alpha:=\min_{1\le i\le k}\alpha(T_i)$. Then $|G|/\w{M}|>2$ and $|G|/|\widetilde{M}|\geq \min\{4,\alpha^2\}$.\end{lemma}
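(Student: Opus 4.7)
The plan is to exploit equation~\eqref{impnote}, which gives $|\widetilde M|=|\hat T_{i,j}|\sum_{u\in T}[T:C_T(u)]$, where $T:=T_i\cong T_j$ is the common isomorphism type of the two factors in the diagonal $D_{i,j,\alpha}$. Since $\sum_{u\in T}[T:C_T(u)]=\sum_C |C|^2$, summed over $T$-conjugacy classes, and $|G|=|T|^2\,|\hat T_{i,j}|$, both inequalities will reduce to upper bounds on $\sum_C |C|^2$ via
\[
\frac{|G|}{|\widetilde M|}=\frac{|T|^2}{\sum_C |C|^2}.
\]

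For the first inequality $|G|/|\widetilde M|>2$, the key point is that $T$ is nonabelian simple, so $Z(T)=1$ and $C_T(a)$ is proper for every $a\neq 1$; combined with $\langle a\rangle\subseteq C_T(a)$ this yields $|C_T(a)|\geq 2$ and hence $|a^T|\leq|T|/2$. Summing gives
\[
\sum_C |C|^2\leq 1+(|T|-1)|T|/2<|T|^2/2
\]
(the strict inequality uses $|T|>2$), which is more than enough.

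For the second inequality, since $\alpha\leq\alpha(T)$ it suffices to prove $|G|/|\widetilde M|\geq\min\{4,\alpha(T)^2\}$, and I plan to deduce this from the stronger pointwise claim that $|C_T(a)|\geq\min\{4,\alpha(T)^2\}$ for every $a\neq 1$. The starting point is: $C_T(a)$ lies in some maximal subgroup $N$ of $T$, so $a^T\subseteq\widetilde N$ and $|a^T|\leq|\widetilde N|\leq|T|/\alpha(T)$, giving $|C_T(a)|\geq\alpha(T)$. To boost this to $\alpha(T)^2$ (when that does not exceed $4$), I will combine integrality of $|C_T(a)|$ with two further inputs: (i) for involutions, $|C_T(a)|$ is even and a Brauer--Fowler argument rules out $|C_T(a)|=2$ for nonabelian simple $T$, so $|C_T(a)|\geq 4$; (ii) for $a$ of order at least $3$, $|C_T(a)|\geq o(a)\geq 3$.

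The principal obstacle is the borderline case of a self-centralizing element of order $3$ (where $|C_T(a)|=3$): this occurs in only a short list of nonabelian finite simple groups (for instance $A_5$ and $\psl_2(7)$; in fact, since $|a^T|=|T|/3$ forces $\alpha(T)\leq 3$, the Boston--Shalev theorem confines the list to finitely many small cases), and for each such $T$ I would verify directly that $\alpha(T)^2\leq 3$, so $|C_T(a)|=3\geq\min\{4,\alpha(T)^2\}$ still holds. Once this short verification is done, the pointwise bound gives $\sum_C|C|^2\leq 1+(|T|-1)|T|/\min\{4,\alpha(T)^2\}\leq |T|^2/\min\{4,\alpha(T)^2\}$ (using $|T|\geq 60$), and the second inequality follows.
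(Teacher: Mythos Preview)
Your overall strategy coincides with the paper's: both reduce via \eqref{impnote} to $|G|/|\widetilde M|=|T|^2/\sum_{u\in T}[T:C_T(u)]$ and then pivot on whether every nontrivial element of $T$ has centralizer of order at least $4$. Your argument for $|G|/|\widetilde M|>2$ via the trivial bound $|C_T(a)|\ge o(a)\ge 2$ is correct and in fact slightly more direct than the paper's treatment (which extracts $>2$ as a byproduct of the case split for the second inequality). The observation $|C_T(a)|\ge\alpha(T)$, obtained by placing $a$ in a maximal subgroup $N$ so that $a^T\subseteq\widetilde N$, is a pleasant addition but not essential to the logic.

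The one genuine gap is your justification for the finiteness of the exceptional list. The parenthetical ``since $|a^T|=|T|/3$ forces $\alpha(T)\le 3$, the Boston--Shalev theorem confines the list to finitely many small cases'' does not work: Theorem~\ref{BostonShalev} supplies a \emph{lower} bound $\alpha(T)>1+\alpha_0$, while the upper bound $\alpha(T)\le 3$ holds for infinitely many simple groups, so neither inequality restricts $T$. What is actually required is a classification of the nonabelian simple groups possessing a self-centralizing element of order $3$; the paper invokes \cite{AradChillag} at precisely this point to pin down $T$, and then finishes by computing $|T|^2/\sum_u[T:C_T(u)]$ directly for the exceptional group rather than bounding centralizers pointwise. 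Your alternative plan---to verify $\alpha(T)^2\le 3$ for each $T$ on the list, so that $|C_T(a)|=3\ge\min\{4,\alpha(T)^2\}$---is equally valid once the list is established (and the checks for $A_5$ with $\alpha=3/2$ and $\psl_2(7)$ with $\alpha=7/5$ do go through), but you must replace the appeal to Boston--Shalev with a citation to an appropriate classification result.
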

\begin{proof}
By \eqref{impnote} in the proof of Proposition \ref{MaxDirect}, and adopting the notation therein, we have
\begin{align}\label{impnote2}|\w{M}|=|\hat{T}_{i,j}|\sum_{u\in T}[T:C_T(u)].
\end{align}
Hence, $|G|/|\widetilde{M}|=|T|^2/\sum_{u\in T}[T:C_T(u)]$. If the centraliser of every element of $T$ has order at least $4$, then $\sum_{u\in T}[T:C_T(u)]\le |T|^2/4$, which gives us what we need. 

So assume that $T$ has an element $x$ with $|C_T(x)|<4$. Then since no nonabelian finite simple group has a Sylow subgroup of order $2$, the element $x$ must have order $3$. It now follows from \cite[Theorem A]{AradChillag} that $T=A_5$. One can check that in this case $\alpha(T)=3/2$, and $|T|^2/\sum_{u\in T}[T:C_T(u)]=3600/91>\alpha(T)^2$. By \eqref{impnote2}, the proof is complete.
\end{proof}

We are finally ready to prove Theorem \ref{maineinv}. \begin{proof}[Proof of Theorem \ref{maineinv}] Let $G$ be a  direct product of $k \ge 1$ nonabelian finite simple groups.
By Proposition \ref{MaxDirect}, there exist absolute constants $c_0,C'$ and $\mu=200$ such that $\mn_{\mathrm{prod},n}(G)\le c_0kn^\mu$ and $\mn_{n}(G)\le C'k^2n^\mu$ for all $n\ge 1$. Let $\alpha:=\min_{1\le i\le k}\alpha(T_i)$, and set 
$$\beta:=\left\lceil \max\left\{\frac{\log(c_0k)}{\log \alpha}, \mu+\log(c_0k),\mu+\frac{\log(C'k^2)}{\log(\min\{4,\alpha^2\})}\right\}\right\rceil.$$ Also, let $\mathrm{Max}_{2}(G)$ be a set of representatives for the conjugacy classes of those maximal subgroups $M$ of $G$ satisfying $|G|/|\w{M}|<2$. By Lemma \ref{2lemma}, all groups in $\mathrm{Max}_{2}(G)$ have product type. Thus, 
 $|\widetilde{M}|/|G|\le 1/\alpha$ for all $M\in\mathrm{Max}_{2}(G)$, and by Proposition \ref{MaxDirect}, we have $|\mathrm{Max}_{2}(G)|\le c_0k$. Furthermore, for $n\geq 2$, Proposition \ref{MaxDirect} and Lemma \ref{2lemma} yield $\mn_n(G)\le f_k(n)n^{\mu}$, where $f_k(n):=c_0k$ if $n<\min\{4,\alpha^2\}$, and $f_k(n):=C'k^2$ if $n\geq \min\{4,\alpha^2\}$. Notice that $\mu+\log{f_k(n)}/\log{n}\le \beta$. Hence 
	\begin{align*} 1-\mathbb{P}_{I}(G,t) &\le \sum_{M\in \rm{Max}_{2}(G)}(|\w{M}|/|G|)^{t}+\sum_{n\ge 2}\mn_{n}(G)/n^{t}\\
		&\le \frac{c_0k}{\alpha^{t}}+\sum_{n\ge 2}\frac{f_k(n)n^{\mu}}{n^{t}}\\
		&\le  \frac{\alpha^{\log{(c_0k)/\log(\alpha)}}}{\alpha^{t}}+\sum_{n\ge 2} \frac{n^{\mu+\log(f_k(n))/\log{n}}}{n^{t}}\\
		&  \leq \frac{\alpha^\beta}{\alpha^t}+\sum_{n\ge 2}\frac{n^\beta}{n^{t}}
		=\frac{1}{\alpha^{t-\beta}}+\sum_{n\ge 2}\frac{1}{n^{t-\beta}}.\end{align*}

	Thus
	\begin{align*}C(G) &= \sum_{t\ge 0}(1-\mathbb{P}_{I}(G,t))\\
		&\le \beta+2+\sum_{t\ge \beta+2}(1-\mathbb{P}_{I}(G,t))\\
		&\le \beta+2+\sum_{u\ge 2}\frac{1}{\alpha^{u}}+\sum_{u\ge 2}\sum_{n\ge 2}\frac{1}{n^{u}}\\
		& =\beta+2+\frac{1}{\alpha(\alpha-1)}+\sum_{n\ge 2}\left (\sum_{u\ge 2} \frac{1}{n^{u}}\right )\\
		&= \beta+2+\frac{1}{\alpha(\alpha-1)}+\sum_{n\ge 2}\frac{1}{n(n-1)}\\
		&=\beta+2+\frac{1}{\alpha(\alpha-1)}+\sum_{n\ge 1}\frac{1}{n(n+1)}\\
		&=\beta+\frac{1}{\alpha(\alpha-1)}+3.
	\end{align*}
		Since $\alpha>1+\alpha_0$ for some absolute constant $\alpha_0$ by Theorem \ref{BostonShalev}, the result follows.
\end{proof}

\section{Direct powers of finite simple groups}

In this section, we show that the estimates given in Theorems \ref{maine} and \ref{maineinv}   are best possible, up to improvements in the additive constants.
To this end, fix a nonabelian finite simple group $T$, and consider the direct power $G=T^k$ of $k$ copies of $T$. Set $l:=l(T)$ to be minimum index of a proper subgroup of $T$. Then $m_l(T^k)\geq lk$. Thus, $\mathcal{M}(T^k)\geq \log{k}/\log{l}+1$. It then follows from \eqref{amp} and Proposition \ref{thm1} that
	\begin{equation}\label{ebound}
		e_1(T^k)=\frac{\log k}{\log l}+c_T(k) \text { where }
		-3 \leq c_T(k)\leq 6.
	\end{equation}

Now, choose $M$ to be a maximal subgroup of $T$ with $|T|/|\widetilde{M}|=\alpha:=\alpha(T)$. Consider the following maximal subgroup of product type in $T^k:$
	$$M_i=T \times \cdots \times M \times \cdots \times T,$$
	where $M$ occupies position $i$.
	For, $i_1 < \cdots < i_r$, define $\widetilde{M}_{i_1, \dots, i_r} \coloneqq \bigcap _{j=1}^{r}\widetilde{M}_{i_j}$, so that
	$\frac{|\widetilde{M}_{i_1, \dots, i_r}|}{|G|}=\alpha^{-r}$.
	Moreover, let $\Omega_t \coloneqq \bigcup_{1 \le i \le k}\widetilde{M}_i^{t}$.
	Using the inclusion-exclusion principle and the fact that a subset $\{g_1, \dots, g_t\}$ fails to invariably generate $G$ if and only if it is contained in $\widetilde{M}$ for some maximal subgroup $M$ of $G$, we have
	\begin{align*}
		& 1-\mathbb{P}_{I}(G,t)  \geq \frac{|\Omega_t|}{|G|^t}  =\frac{\sum_{i}|\widetilde{M}_i|^t-\sum_{i_1 < i_2}{|\widetilde{M}_{i_1,i_2}|^t}+ \dots + (-1)^{k+1}\sum_{i_1 < \dots <i_k}|\widetilde{M}_{i_1, \dots, i_k}|^t}{|G|^t} \\
		&= k \alpha^{-t}-\binom{k}{2}\alpha^{-2t}+ \dots +(-1)^{k+1}\binom{k}{k}\alpha^{-kt}=1- \sum_{j=0}^{t}\binom{k}{j}(-\alpha^{-t})^j
		=1-(1-\alpha^{-t})^k.
	\end{align*}        
	Therefore,
 \begin{align}\label{Cbound}
  C(G)=\sum_{t=0}^{\infty}1-\mathbb{P}_{I}(G,t) \geq \sum_{t=0}^{\infty} 1-(1-\alpha^{-t})^k.   
 \end{align}
		To conclude, fix $\epsilon>0$. We show that there exist absolute constants $C_{\epsilon}$ and $k_{\epsilon}$ such that   
	\begin{equation*}
		C(G) \geq (1-\epsilon)\frac{\log{k}}{\log{\alpha}}+C_{\epsilon}
	\end{equation*}
	whenever $k\geq k_{\epsilon}$ 
	To this end, observe that for fixed $a>0$, $1-(1-a^x)^k$ is a decreasing function for $x \geq 0$. Therefore, if $\mu<1/k$ and $a<1$ then
	\begin{align*}
		\sum_{n=0}^{\infty} 1-(1-a^n)^k & \geq \sum_{n \leq \log_{a}{(1/\mu k)}}1-(1-a^n)^k \\
		& \geq \sum_{n \leq \log_{a}{(1/\mu k)}}1-(1-a^{\log_{a}{1/\mu k}})^k \\ 
		&= \sum_{n \leq \log_{a}{(1/\mu k)}} 1-\left(1 -\frac{1}{\mu k} \right)^k \\
		& =\left(1-\left( 1- \frac{1}{\mu k}\right)^k \right)\log_{1/a}{(\mu k)} \geq \left(1- \frac{1}{e^{1/\mu}} \right)\log_{1/a}{(\mu k)},
	\end{align*}
	where, to obtain the last inequality, we simply used the fact that $\left( 1-1/\mu k\right)^k \le 1/e^{1/\mu}$. Now choose $\mu<\min\{1/k,1/\log_e(1/\epsilon)\}$; set $a:=1/\alpha$; and set $C_{\epsilon}:=(1-\epsilon)\log{\mu}/\log{(1+\alpha_0)}$, where $\alpha_0$ is as in Theorem \ref{BostonShalev}. Applying \eqref{Cbound} then yields  
\begin{equation}\label{Cbound2}
	C(T^k)\geq \left(1-\frac{1}{e^{1/\mu}}\right)\frac{\log k}{\log \alpha}+\frac{\log{\mu}}{\log{\alpha}}\geq \left(1-\epsilon\right)\frac{\log k}{\log \alpha}+C_{\epsilon}.
\end{equation}
We remark that if $T=A_5$, then $\alpha=3/2$. Applying the first inequality in \eqref{Cbound2} in this case with $\mu:=1$ then yields $C(A_5^k)\geq (1-1/e)\log{k}/\log{\frac{3}{2}}$.  
Therefore, by \eqref{ebound},
$$C(A_5^k)-e_1(A_5^k)\geq \log k\left(\frac{1}{\log \frac{3}{2}}\left(1-\frac{1}{e}\right)-\frac{1}{\log 5}\right)-c_{A_5}(k).
$$
Since $\frac{1}{\log \frac{3}{2}}\left(1-\frac{1}{e}\right)-\frac{1}{\log 5}>0$ this shows that the difference between the two
invariants can be an arbitrarily large number.



\end{document}